\theoremstyle{remark}
\definecolor{darkcerulean}{rgb}{0.1, 0.1, 0.7}
	\tikzset{box/.style ={
			rectangle, 
			rounded corners =5pt, 
			minimum width =50pt, 
			minimum height =20pt, 
			inner sep=5pt, 
			draw=blue} 
}
\tikzset{zbox/.style ={
		rectangle, 
		minimum width =50pt, 
		minimum height =20pt, 
		inner sep=5pt, 
		draw=black} 
}
\tikzset{ball/.style ={
	circle, 
	minimum width =20pt, 
	minimum height =20pt, 
	inner sep=0.1pt, 
	draw=blue} 
}
\tikzset{global scale/.style={
scale=#1,
every node/.append style={scale=#1}
}
}
\newcommand{\TheTitle}{\Large Accelerated primal-dual methods with enlarged step sizes and operator learning for nonsmooth optimal control problems}
\newcommand{\TheAuthors}{{Yongcun Song, Xiaoming Yuan, Hangrui Yue}}
\title{{\TheTitle}\thanks{July 24, 2023
\funding{The work of  Y.S was supported by the Humboldt Research Fellowship for postdoctoral researchers. The work of X. Y was partially supported by the theme-based research scheme T32-707/22-N. The work of H.Y was supported by the Fundamental Research Funds for the Central Universities, Nankai University (Grant Number 63221035). }
}}
\author{Yongcun Song\thanks{Chair for Dynamics, Control, Machine Learning and Numerics$-$Alexander von Humboldt Professorship, Department of Mathematics,  Friedrich-Alexander-Universit\"at Erlangen-N\"urnberg, 91058 Erlangen, Germany, \email{ysong307@gmail.com}}
	\and
	Xiaoming Yuan\thanks{Department of Mathematics, The University of Hong Kong, Pok Fu Lam, Hong Kong, \email{xmyuan@hku.hk}}
	\and
	Hangrui Yue\thanks{School of Mathematical Sciences, Nankai University, Tianjin 300071, China, \email{yuehangrui@gmail.com}}
}
\begin{document}
	
\maketitle
\begin{abstract}
We consider a general class of nonsmooth optimal control problems with partial differential equation (PDE) constraints, which are very challenging due to their nonsmooth objective functionals and the resulting high-dimensional and ill-conditioned systems after discretization. We focus on the application of a primal-dual method, with which different types of variables can be treated individually in iterations and thus its main computation at each iteration only requires solving two PDEs. Our target is to accelerate the primal-dual method with either enlarged step sizes or operator learning techniques. The accelerated primal-dual method with enlarged step sizes improves the numerical performance of the original primal-dual method in a simple and universal way, while its convergence can be still proved rigorously. For the operator learning acceleration, we construct deep neural network surrogate models for the involved PDEs. Once a neural operator is learned, solving a PDE requires only a forward pass of the neural network, and the computational cost is thus substantially reduced. The accelerated primal-dual method with operator learning is mesh-free, numerically efficient, and scalable to different types of PDEs. The acceleration effectiveness of these two techniques is promisingly validated by some preliminary numerical results.
\end{abstract}
\begin{keywords}
Optimal control; nonsmooth optimization; primal-dual method; operator learning; deep neural network
\end{keywords}
\begin{AMS}
	49M41,
	35Q90,
	35Q93,
	65K05,
	90C25,
	68T07
\end{AMS}

\section{Introduction}
Optimal control problems with partial differential equation (PDE) constraints  play a crucial role in various areas such as
physics, chemistry, biology, engineering, and finance. We refer the reader to \cite{glowinski1994exact, glowinski1995exact, glowinski2008exact, hinze2008optimization, lions1971optimal, troltzsch2010optimal} for a few references. Typically, additional nonsmooth constraints are imposed on the control (variable) to promote some desired properties such as boundedness, sparsity, and discontinuity; see  \cite{elvetun2016,kunisch2004,lions1971optimal,stadler2009elliptic,troltzsch2010optimal} and references therein. These optimal control problems are numerically challenging. One particular reason is that the PDE constraints and other nonsmooth constraints on the control are coupled together and  the resulting algebraic systems after discretization are usually high-dimensional and ill-conditioned. To solve such a nonsmooth optimal control problem, it is desirable to consider different types of variables separately in iterations so that a nonsmooth optimal control problem can be decoupled into some much easier subproblems while there is no need to solve any computationally demanding system after discretization. For this purpose, it becomes necessary to deliberately consider the structure of the model under discussion for algorithmic design.

In this paper, we study the algorithmic design for optimal control problems that can be uniformly and abstractly formulated as
\begin{equation}\label{Basic_Problem}
		\min_{u\in {U}, y\in {Y}} \quad   \frac{1}{2}\| y-y_d \|^2_Y+\frac{\alpha}{2}\|u\|_U^2+ \theta(u) \\\qquad
		{\hbox{s.t.}}~  y=Su,
\end{equation}
    where $u\in U$ and $y\in Y$, with $U$ and $Y$ being function spaces, are called the control and the state, respectively; $y_d\in Y$ is a given target; and $\alpha>0$ is a regularization parameter. In addition, $y=Su$ represents a linear PDE, in which $S: U\rightarrow Y$ is the corresponding solution operator. The nonsmooth convex functional $\theta(u): U\rightarrow\mathbb{R}$ is employed to impose some additional constraints on the control, such as boundedness \cite{lions1971optimal,troltzsch2010optimal}, sparsity \cite{stadler2009elliptic}, and discontinuity \cite{elvetun2016}. Various optimal control problems with PDE constraints can be covered by (\ref{Basic_Problem}). For instance, the abstract state equation $y=Su$ can be specified as the parabolic equation \cite{glowinski1994exact}, the elliptic equation \cite{hintermuller2002primal}, the wave equation \cite{glowinski1995exact}, etc. Also, the control $u$ can be a distributed control or a boundary control. Moreover, the functional $\theta(u)$ can be the indicator function of an admissible set \cite{glowinski2022}, the $L^1$-regularization term \cite{stadler2009elliptic}, or the total variation regularization \cite{elvetun2016,kunisch2004}.

\subsection{State-of-the-art}
Numerical study for optimal control problems, including (\ref{Basic_Problem}), has become an increasingly active field in the past decades. In the literature, the semi-smooth Newton (SSN) methods have been studied intensively and extensively; see, e.g., \cite{hintermuller2002primal,hinze2008optimization,kroner2011,KR2002,ulbrich2011semismooth} for control constrained optimal control problems, and \cite{stadler2009elliptic} for sparse elliptic optimal control problems. In \cite{hintermuller2002primal}, it has been proved that SSN methods possess locally superlinear convergence and they usually can find high-precision solutions provided that some initial values are appropriately chosen. Computationally, it is notable that, at each iteration of SSN methods, one encounters a large-scale and ill-conditioned Newton system. Practically, it is required to solve these Newton systems up to very high accuracy to ensure the convergence, which is numerically challenging, especially for time-dependent problems \footnote{The same concerns also apply to interior point methods, e.g., \cite{pearson2017}, for different types of
optimal control problems.}; see, e.g., \cite {pearson2012regularization,porcelli2015preconditioning,schiela2014operator,stoll2013one} for more discussions.

On the other hand, the alternating direction method of multipliers (ADMM) \cite{gabay1975dual,glowinski1975approximation} has  been applied to various optimal control problems modeled by (\ref{Basic_Problem}); see, e.g.  \cite{attouch2008augmented,GSY2019,glowinski2022,pougkakiotis2020,zhang2017}.  At each iteration of the ADMM metods, one only needs to solve a simple optimization subproblem, which generally has a closed-form solution, and a standard unconstrained optimal control problem. The dimensionality of the unconstrained optimal control subproblems after discretization is inevitably high. Hence, these subproblems must be solved iteratively while it is computationally expensive to acquire accurate solutions of these subproblems. To tackle this computation bottleneck, an inexact ADMM was proposed in \cite{glowinski2022}, with an automatically adjustable inexactness criterion for the inner iterations. As a result, the unconstrained optimal control subproblems only need to be solved up to a rather low accuracy by a few inner iterations, while the overall convergence of the inexact ADMM can be still guaranteed rigorously. Notwithstanding, we reiterate that optimal control subproblems still have to be solved for the inexact ADMM in \cite{glowinski2022}.

\subsection{Vanilla application of the primal-dual method}
To solve the problem (\ref{Basic_Problem}) efficiently, we aim at such algorithms that can avoid both complicated Newton systems and unconstrained optimal control subproblems. For this purpose, it suffices to apply the primal-dual method in \cite{chambolle2011first}, because it does not require specific initial iterates and its resulting subproblems are easier than the original model. The primal-dual method in \cite{chambolle2011first} and its variants have been widely applied in various areas such as PDEs \cite{liu2023jcp,liu2023arxiv}, imagining processing \cite{chambolle2011first}, statistical learning \cite{goldstein2015adaptive}, and inverse problems \cite{biccari2022,clason2017primal,tian2018convergence,valkonen2014primal}.

We shall show that, the vanilla application of the primal-dual method in \cite{chambolle2011first} to the abstract model (\ref{Basic_Problem}) requires solving two PDEs at each iteration, and thus it differs from the just-mentioned SSN and ADMM approaches in the literature. To fix ideas, we focus on a parabolic control constrained optimal control problem in the following discussion and all the results can be easily extended to other  problems modeled by (\ref{Basic_Problem}).

Let $\Omega\subset\mathbb{R}^d (d \geq 1)$ be a bounded domain and $\Gamma:=\partial \Omega$ its boundary. We consider the following parabolic optimal control problem:
  \begin{equation}\label{optimal_control}
  	\min_{u\in {L^2(\mathcal{O})}, y\in {L^2(Q)}} \frac{1}{2}\| y-y_d \|^2_{L^2(Q)}+\frac{\alpha}{2}\|u\|_{L^2(\mathcal{O})}^2+ \theta(u)
  \end{equation}
  subject to the parabolic problem
  \begin{flalign}\label{state_eqn}
  	\begin{aligned}
  		\frac{\partial y}{\partial t}-\Delta y=u\chi_{\mathcal{O}} ~\text{in}~ \Omega\times(0,T), \quad
  		y=0 ~\text{on}~ \Gamma\times(0,T),\quad
  		y(0)=0.
  	\end{aligned}
  \end{flalign}
  Above, $Q=\Omega\times(0,T)$ with $0<T<+\infty$; $\mathcal{O}=\omega\times(0,T)$ with $\omega$ an open subset of $\Omega$; $\chi_\mathcal{O}$ is the characteristic function of $\mathcal{O}$; $y_d\in L^2(Q)$ is a given target; and $\alpha>0$ is the regularization parameter. Moreover, we specify $\theta(u)$ as the indicator function of the admissible set $U_{ad}$:
  \begin{equation}\label{admissible_set}
  	U_{ad}:=\{v\in L^\infty(\mathcal{O})| a\leq v(x,t)\leq b ~\text{a.e. in} ~\Omega\times(0,T)\},
  \end{equation}
  with $a$ and $b$ given constants.  Existence and uniqueness of the solution to problem (\ref{optimal_control})-(\ref{admissible_set}) have been well studied in \cite{lions1971optimal,troltzsch2010optimal}.

To apply the primal-dual method in \cite{chambolle2011first} to problem (\ref{optimal_control})-(\ref{admissible_set}), we first observe that  (\ref{optimal_control})-(\ref{admissible_set}) can be rewritten as
    \begin{equation}\label{primalproblem}
    \begin{aligned}
    \min_{u\in L^2(\mathcal{O})} f(Su)+g(u),
    \end{aligned}
    \end{equation}
    where $ f(Su)= \frac{1}{2}\left\| Su-y_d \right\|_{L^2(Q)}^2$ and $g(u)=\frac{\alpha}{2}\|u\|_{L^2(\mathcal{O})}^2+\theta(u)$.
    Introducing an auxiliary variable $p\in {L^2(Q)}$, it follows from the Fenchel duality \cite{bauschke2011} that the primal-dual formulation of (\ref{primalproblem}) reads as
    \begin{equation}\label{saddlepoint}
    \min_{u\in L^2(\mathcal{O})}\max_{p\in {L^2(Q)}}  g(u)+(p, Su)_{L^2(Q)}-f^*(p),
    \end{equation}
    where $(\cdot,\cdot)_{L^2(Q)}$ denotes the canonical $L^2$-inner product, $f^*(p):={\sup}_{y\in{L^2(Q)}}\{(y,p)_{L^2(Q)}-f(y)\}$ is the convex conjugate of $f(y)$ and can be specified as
    $f^*(p)=\frac{1}{2}\|p\|_{L^2(Q)}^2+(p, y_d)_{L^2(Q)}$.
    Then, implementing the primal-dual method in \cite{chambolle2011first} to (\ref{saddlepoint}), we readily obtain the following scheme:
    \begin{numcases}
       ~u^{k+1}=\arg\min_{u\in L^2(\mathcal{O})} \{g(u)+({p}^k,S u)_{L^2(Q)}+\frac{1}{2r}\|u-u^k\|_{L^2(\mathcal{O})}^2\},\label{pd1}\\
       \bar{u}^k=2u^{k+1}-u^k,\label{pd2}\\
    p^{k+1}=\arg\max_{p\in L^2(Q)}\{(p, S{\bar{u}}^k)_{L^2(Q)}-f^*(p)-\frac{1}{2s}\|p-p^k\|_{L^2(Q)}^2\}\label{pd3},
    \end{numcases}
    where the parameters $r>0$ and $s>0$ can be understood as the step sizes of the primal and dual subproblems, respectively.

For the solutions to subproblems (\ref{pd1}) and (\ref{pd3}), one can show that
\begin{equation}\label{u_k+1}
	u^{k+1}=P_{U_{ad}}\left(-\frac{S^*{p}^k-\frac{1}{r}u^k}{\alpha+\frac{1}{r}}\right),\quad
	p^{k+1}=\left(S(2u^{k+1}-u^k)+\frac{1}{s}p^k-y_d\right)/(1+\frac{1}{s}),
\end{equation}
where $S^*: L^2(Q)\rightarrow L^2(\mathcal{O})$ is the adjoint operator of $S$, and $P_{U_{ad}}(\cdot)$ denotes the projection onto the admissible set $U_{ad}$, namely, $P_{U_{ad}}(v)(x,t)  := \max\{a, \min\{v(x,t), b\}\}$ a.e in $\mathcal{O}, \forall v\in L^2(\mathcal{O})$.
It follows from (\ref{u_k+1}) that the main computation cost of (\ref{pd1})-(\ref{pd3}) consists of solving $y^{k}:=S(2u^{k+1}-u^k)$, i.e., the state equation (\ref{state_eqn}) with $u=2u^{k+1}-u^k$, and computing $q^k|_{\mathcal{O}}:=S^*{p}^k$, where $q^k$ is obtained by solving the adjoint equation:
\begin{equation}\label{adjointP}
	-\frac{\partial q^k}{\partial t}
	-\Delta q^k={p^k}~ \text{in}
	~\Omega\times(0,T),~
	q^k=0~ \text{on}~ \Gamma\times(0,T),~
	q^k(T)=0.
\end{equation}

Obviously, the main computation of (\ref{pd1})-(\ref{pd3}) is solving only the PDEs (\ref{state_eqn}) and (\ref{adjointP}). The Newton systems of SSN methods and the unconstrained optimal control subproblems of ADMM methods are both completely avoided. Therefore, the computational load of the primal-dual method (\ref{pd1})-(\ref{pd3}) at each iteration is much lower than that of the SSN and ADMM methods.  Meanwhile, when $\theta(u)$ is an $L^1$ or a total variation regularization, methods for solving the resulting subproblem (\ref{pd1}) can be found in Section \ref{sec:numerical} and \cite{song2023admmpinns}. It can be seen that, for these two cases, the primal-dual method (\ref{pd1})-(\ref{pd3}) also only requires solving two PDEs as shown in (\ref{u_k+1}).

\subsection{Enlarging step sizes for (\ref{pd1})--(\ref{pd3})}
As analyzed in \cite{chambolle2011first}, to ensure the convergence of the primal-dual method (\ref{pd1})--(\ref{pd3}),  the step sizes $r$ and $s$ are required to satisfy the condition
\begin{equation}\label{convergence_condition}
	r\cdot s<\frac{1}{\|S\|^2},
\end{equation}
where
$
\|S\|={\sup}_{\|v\|_{L^2(\mathcal{O})}=1}\{\|Sv\|_{L^2(Q)}, \forall v\in L^2(\mathcal{O})\}
$. Numerical efficiency of the primal-dual method (\ref{pd1})--(\ref{pd3}) certainly depends on the choices of the step sizes $r$ and
$s$. In the literature, to allow for larger step sizes and hence accelerate convergence, $r\cdot s$ are usually chosen to be very close to, or even equal the upper bound $\frac{1}{\|S\|^2}$. It is thus clearly interesting to discuss if the upper bound $\frac{1}{\|S\|^2}$ can be further enlarged theoretically, while the convergence of the primal-dual method (\ref{pd1})--(\ref{pd3}) can be still guaranteed.
Recently, it has been shown in \cite{he2022} that, for saddle point problems in the generic convex setting, the convergence condition (\ref{convergence_condition}) can be optimally improved to
$$
r\cdot s < \frac{4}{3} \cdot \frac{1}{\|S \|^2}.
$$
We are motivated by the work \cite{he2022} and consider whether or not the upper bound $\frac{4}{3} \cdot \frac{1}{\|S \|^2}$ can be further enlarged for the model (\ref{Basic_Problem}), given that the functionals $ \frac{1}{2}\left\| Su-y_d \right\|_{L^2(Q)}^2$ and $\frac{\alpha}{2}\|u\|_{L^2(\mathcal{O})}^2$ are indeed strongly convex. Below, we shall show that, to ensure the convergence of the primal-dual method (\ref{pd1})--(\ref{pd3}) for problem (\ref{Basic_Problem}), the step sizes $r$ and $s$ can be chosen subject to
\begin{equation}\label{o_con_bound}
	r\cdot s < \frac{4 + 2 \alpha r}{3} \cdot \frac{1}{\|S \|^2}.
\end{equation}
As a result, the step sizes $r$ and $s$ can be enlarged for the primal-dual method (\ref{pd1})--(\ref{pd3}) and its numerical performance can be accelerated. With (\ref{o_con_bound}), the primal-dual method (\ref{pd1})--(\ref{pd3}) is accelerated simply by a larger interval for possible choices of the step sizes, and the computational load of each iteration remains. As we shall show, this is a simple and universal way for accelerating the primal-dual method (\ref{pd1})--(\ref{pd3}) by reducing its number of iterations, while its convergence can be still proved rigorously.

\subsection{Accelerating (\ref{pd1})--(\ref{pd3}) with operator learning}
In the context of traditional numerical methods, the PDEs (\ref{state_eqn}) and (\ref{adjointP}) should be solved repeatedly by certain mesh-based numerical discretization schemes (e.g., finite difference methods (FDM) or finite element methods (FEM)), which require solving large-scale and ill-conditioned algebraic systems. Even a single implementation of such a PDE solver could be expensive; hence the computation cost for solving the PDEs (\ref{state_eqn}) and (\ref{adjointP}) repeatedly is usually extremely high. Furthermore, given another target $y_d\in L^2(Q)$, one has to solve the resulting optimal control problem from scratch, and hence solve the state and adjoint equations repeatedly again.

To tackle the computational difficulty above, we advocate to adopt deep learning techniques, which have recently emerged as a new powerful tool for scientific computing problems thanks to the universal approximation property and great expressibility of deep neural networks (DNNs). Indeed, various deep learning approaches have been developed for PDEs; see e.g. \cite{beck2019,e2018,han2018,lu2021learning,ludeepxde2021,raissi2019physics,sirignano2018dgm,wang2021} and references therein. Compared with traditional numerical solvers for PDEs, these deep learning techniques are usually mesh-free, easy to implement, and very flexible to different PDEs. It is arguably accepted that deep learning techniques are helping alleviate human efforts in algorithmic design yet empowering the solvability of a large class of scientific computing problems. Among deep learning techniques, one is to approximate PDE solutions via DNNs, such as the deep Ritz method \cite{e2018}, the deep Galerkin method \cite{sirignano2018dgm},  and physic-informed neural networks \cite{ludeepxde2021,pang2019,raissi2019physics,yu2022gPINN}. Despite that these methods have shown promising results in diversing applications, each of them is tailored for a specific PDE. It is thus necessary to train a new neural network given a different input function (e.g., initial condition, boundary condition, or source term), which is computationally costly and time-consuming. Hence, these methods are not applicable to (\ref{state_eqn}) and (\ref{adjointP}) because they have to be solved repeatedly with different $u^k$ and $p^k$.

Another deep learning technique, called operator learning, is to apply a DNN to approximate the solution operator of a PDE, which maps from an input function to the PDE solution, see e.g., \cite{kocachki2021,li2020FNO,lu2021learning,wang2021}. To be concrete, consider a PDE solution operator  $G  : X \rightarrow  Y, v \mapsto w,$ where $X$ and $Y$ are two infinite-dimensional Banach spaces and $w = G (v)$. Operator learning aims at approximating $G$ with a neural network $\mathcal{G}_\theta$ parameterized by $\theta$. Once a neural solution operator is learned, obtaining a PDE solution $\mathcal{G}_\theta (v)$ for a new input function $v$ requires only a forward pass of the neural network. Hence, neural solution operators can be used as effective surrogates for PDEs and are computationally attractive for problems that require repetitive yet expensive simulations, see e.g., \cite{hwang2021solving,lye2021iterative,wang2021fast}.

We are thus inspired to consider constructing two DNN surrogates for the PDEs (\ref{state_eqn}) and (\ref{adjointP}) by operator learning to accelerate the primal-dual method (\ref{pd1})-(\ref{pd3}). Precisely, we propose to construct two neural surrogates $y=\mathcal{S}_{\theta_s}(u)$ and $q=\mathcal{S}_{\theta_a}(p)$ parameterized by $\theta_s$ and $\theta_a$ for (\ref{state_eqn}) and (\ref{adjointP}), respectively. Then, replacing $S$ and $S^*$ by $\mathcal{S}_{\theta_s}$ and $\mathcal{S}_{\theta_a}$ in (\ref{u_k+1}), we propose the following primal-dual method with operator learning for solving (\ref{optimal_control})-(\ref{admissible_set}):
\begin{equation}\label{pdol}
u^{k+1}=P_{U_{ad}}\left(-\frac{\mathcal{S}_{\theta_a}({p}^k)-\frac{1}{r}u^k}{\alpha+\frac{1}{r}}\right), \quad p^{k+1}=\left(\mathcal{S}_{\theta_s}(2u^{k+1}-u^k)+\frac{1}{s}p^k-y_d\right)/(1+\frac{1}{s}).
\end{equation}

Different primal-dual methods can be specified from (\ref{pdol}) by using different operator learning techniques such as the Deep Operator Networks (DeepONets) \cite{lu2021learning}, the physic-informed DeepONets \cite{wang2021}, the Fourier Neural Operator (FNO) \cite{li2020FNO}, the Graph Neural Operator (GNO) \cite{li2020GNO}, and the Laplace Neural Operator (LNO) \cite{cao2023}. Note that, given two neural surrogates, these primal-dual methods with operator learning only require implementing two forward passes of the neural networks and some simple algebraic operations. More importantly, given a different $y_d\in L^2(Q)$, these primal-dual methods with operator learning can be applied directly to the resulting optimal control problems without the need of solving any PDE. Moreover, we reiterate that the resulting primal-dual methods with operator learning for solving (\ref{optimal_control})-(\ref{admissible_set}) can be easily extended to other various optimal control problems in form of (\ref{Basic_Problem}), see Section \ref{se: pdol} for more details.

Finally, we mention that some deep learning techniques have been recently developed for solving optimal control problems with PDE constraints, such as the ISMO \cite{lye2021iterative}, operator learning methods \cite{hwang2021solving,wang2021fast}, the amortized finite element analysis \cite{xue2020}, and physics-informed neural networks (PINNs) methods \cite{barry2022,haoBilevel2022,mowlayi2021,raissi2019physics,sun2022}. All these deep learning methods, however, are designed for only smooth optimal control problems with PDE constraints, and they cannot be directly applied to the nonsmooth problems modeled by (\ref{Basic_Problem}). To tackle this issue, the ADMM-PINNs algorithmic framework has been recently proposed in \cite{song2023admmpinns}. With the advantages of both the ADMM and PINNs, the ADMM-PINNs algorithmic framework in \cite{song2023admmpinns} is applicable to a wide range of nonsmooth optimal control and inverse problems. It is notable that the neural networks in the ADMM-PINNs algorithmic framework have to be re-trained at each iteration.


\subsection{Organization}
The rest of this paper is organized as follows. In Section \ref{sec:convergence_primal_dual}, we prove the convergence of the primal-dual method (\ref{pd1})-(\ref{pd3}) with the enlarged step sizes (\ref{o_con_bound}). In Section \ref{sec:numerical}, we test a parabolic control constrained optimal control problem and validate the efficiency of the accelerated primal-dual method (\ref{pd1})-(\ref{pd3}) with (\ref{o_con_bound}). In Section \ref{se:se}, we showcase extensions to other optimal control problems by a sparse elliptic optimal control problem. In Section \ref{se: pdol}, we focus on the implementation of the primal-dual method with operator learning (\ref{pdol}), and report some numerical results to validate its efficiency. Finally, some conclusions and perspectives are given in Section \ref{sec:conclusion}.

\section{Convergence analysis of (\ref{pd1})-(\ref{pd3}) with (\ref{o_con_bound})}\label{sec:convergence_primal_dual}

In this section, we rigorously prove the convergence for the primal-dual method (\ref{pd1})--(\ref{pd3}) with the enlarged step sizes (\ref{o_con_bound}). For this purpose, we first show that the primal-dual method (\ref{pd1})--(\ref{pd3}) can be equivalently interpreted as a linearized ADMM. We reiterate that the convergence analysis does not depend on the specific form of the solution operator $S$ and the nonsmooth convex functional $\theta(u)$ in (\ref{Basic_Problem}). Hence, the convergence results can be applied to other optimal control problems with PDE constraints in the form of (\ref{Basic_Problem}).

\subsection{Preliminary}
In this subsection, we summarize some known results in the literature for the convenience of
further analysis. We denote by $(\cdot,\cdot)$ and $\|\cdot\|$ the canonical $L^2$-inner product and the associated norm, respectively.

Let $\lambda\in L^2(Q)$ be the Lagrange multiplier associated with the constraint $y=Su$. It is clear that  problem (\ref{optimal_control})-(\ref{admissible_set}) is equivalent to the following saddle point problem:
\begin{equation}\label{saddle_admm}
\min_{u\in L^2(\mathcal{O}), y \in L^2(Q)} \max_{\lambda \in L^2(Q)} g(u) + f(y) + (\lambda, Su - y).
\end{equation}
Let $(u^*, y^*, \lambda^*)^\top$ be the solution of (\ref{saddle_admm}). Then, the first-order optimality condition of (\ref{saddle_admm}) reads as
\begin{equation}
\left\{\begin{aligned}
\label{opt}
&  \partial \theta(u^*)+\alpha u^* + S^*\lambda^*\ni 0  \, , \\
& y^*-y_d-\lambda^*=0,\\
& -Su^*+ y^*=0,
\end{aligned}\right.
\end{equation}
which can be rewritten as the following variational inequalities (VIs):
\begin{equation}\left\{\begin{aligned}
\label{VIopt}
& \theta(u)-\theta(u^*)+\left(u - u^*,  \alpha u^* + S^*\lambda^*  \right) \geq 0, \, \forall u\in L^2(\mathcal{O}), \\
& \left(y - y^*,  y^*-y_d- \lambda^* \right) \geq 0, \, \forall y \in L^2(Q),\\
& \left(q - \lambda^*, -Su^* + y^* \right) \geq 0, \forall q \in L^2(Q).
\end{aligned}\right.\end{equation}

The following lemma will be used later. Its proof can be found in \cite{bauschke2011}, and thus omitted.

\begin{lemma}[Moreau's identity]
	Let $H$ be a Hilbert space and $\phi: H \rightarrow R \cup \{+\infty\}$ a proper, convex and lower semi-continuous extended real-valued functional on $H$. Let $\phi^*(v):={\sup}_{w\in H}(v,w)-\phi(w)$ be the convex conjugate of $\phi(v)$. Then, for all $w \in H$, it holds that
	\begin{equation}
	\label{moreau1}
	w = \arg\min_v\{\phi(v)+ \frac{1}{2}\| v - w \|^2\} + \arg\min_v\{\phi^*(v)+ \frac{1}{2}\| v - w \|^2\}.
	\end{equation}
	
\end{lemma}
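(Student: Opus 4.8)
The plan is to recognize both minimizers appearing in (\ref{moreau1}) as proximal points and to link them through the classical inversion formula for the subdifferential. Fix $w \in H$ and set $p := \arg\min_v\{\phi(v) + \frac{1}{2}\|v-w\|^2\}$ and $q := \arg\min_v\{\phi^*(v) + \frac{1}{2}\|v-w\|^2\}$. Each of these minimizers is well defined and unique: every objective is the sum of a proper, convex, lower semicontinuous functional (recall that $\phi^*$ is automatically convex and lower semicontinuous, being a supremum of affine functions of its argument) and the strongly convex, continuous quadratic $\frac{1}{2}\|\cdot - w\|^2$, hence it is proper, strictly convex, coercive and lower semicontinuous on the Hilbert space $H$. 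The goal is then to prove $p + q = w$.

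First I would characterize $p$ by its first-order optimality condition. Since the quadratic term is finite and continuous on all of $H$, the subdifferential sum rule applies, so $p$ minimizes $\phi(\cdot) + \frac{1}{2}\|\cdot - w\|^2$ if and only if $0 \in \partial\phi(p) + (p - w)$, equivalently $w - p \in \partial\phi(p)$.

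The key step is then the subdifferential inversion identity, valid for a proper, convex, lower semicontinuous $\phi$: for all $u, v \in H$ one has $v \in \partial\phi(u)$ if and only if $u \in \partial\phi^*(v)$. Applying this with $u = p$ and $v = w - p$ yields $p \in \partial\phi^*(w - p)$. Writing $\tilde q := w - p$, this says $w - \tilde q \in \partial\phi^*(\tilde q)$, equivalently $0 \in \partial\phi^*(\tilde q) + (\tilde q - w)$, which is precisely the optimality condition characterizing the minimizer of $\phi^*(\cdot) + \frac{1}{2}\|\cdot - w\|^2$. By the uniqueness established above, $\tilde q = q$, and therefore $w = p + \tilde q = p + q$, which is exactly (\ref{moreau1}).

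The main obstacle is justifying the subdifferential inversion identity, and this is where lower semicontinuity is indispensable: it follows from the Fenchel--Young inequality $\phi(u) + \phi^*(v) \geq (u, v)$ together with the Fenchel--Moreau biconjugation theorem $\phi^{**} = \phi$, which together make equality in Fenchel--Young symmetric in $u$ and $v$ and equivalent to each of $v \in \partial\phi(u)$ and $u \in \partial\phi^*(v)$. The remaining ingredients --- existence and uniqueness of the proximal points and validity of the sum rule for the subdifferential in the Hilbert setting --- are standard and can be quoted directly from \cite{bauschke2011}.
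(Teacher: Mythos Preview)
Your argument is correct and is the standard proof of Moreau's decomposition via the optimality conditions for the two proximal points together with the subdifferential inversion $(\partial\phi)^{-1}=\partial\phi^*$. The paper itself does not supply a proof of this lemma: it omits the argument and refers to \cite{bauschke2011}, where precisely the approach you outline (Fenchel--Young equality case combined with $\phi^{**}=\phi$) is recorded.
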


For any constant $s>0$, applying (\ref{moreau1}) to $s\phi(v)$, instead of $\phi(v)$, we have
\begin{equation}
\label{Moreau}
w = \arg\min_v\{\phi(v)+ \frac{1}{2s}\| v - w \|^2\} + s\arg\min_v\{\phi^*(v)+ \frac{s}{2}\| v - \frac{1}{s}w \|^2\}.
\end{equation}

\subsection{Equivalence between (\ref{pd1})--(\ref{pd3})  and linearized ADMM}
In this subsection, we show that the  primal-dual method (\ref{pd1})--(\ref{pd3}) is equivalent to the following linearized ADMM:
\begin{equation}\label{alg}
\left\{\begin{aligned}
& y^{k+1}  = \arg \min_{y \in L^2(Q)}\left\{f(y)-\left(y,  sSu^{k} \right)+ \frac{s}{2}\left\|y- \frac{1}{s}\lambda^k\right\|^{2}\right\}, \\
& u^{k+1}  =\arg \min_{u\in L^2(\mathcal{O})}\left\{g(u)+\left(\lambda^k + s(Su^{k} - y^{k+1}) , S u\right)+\frac{1}{2 r}\left\|u-u^{k}\right\|^{2}\right\},\\
& \lambda^{k+1} = \lambda^k + s(Su^{k+1} - y^{k+1}) .
\end{aligned}\right.\end{equation}

First, we note that the primal-dual method (\ref{pd1})-(\ref{pd3}) can be rewritten as
\begin{numcases}
~u^{k+1}=\arg\min_{u\in L^2(\mathcal{O})} \{g(u)+({p}^k,S u)+\frac{1}{2r}\|u-u^k\|^2\},\label{pds1}\\
p^{k+1}=\arg\max_{p\in L^2(Q)}\{-f^*(p)-\frac{1}{2s}\|p-(p^k+ s(2Su^{k+1}-Su^k))\|^2\}\label{pds2}.
\end{numcases}
Let
$\lambda^{k+1}= p^k + sS(u^{k+1}- u^k)$. Then,  (\ref{pds1})-(\ref{pds2}) can be written as
\begin{numcases}
~u^{k+1}=\arg\min_{u\in L^2(\mathcal{O})} \{g(u)+({p}^k,S u)+\frac{1}{2r}\|u-u^k\|^2\},\label{pdn1}\\
\lambda^{k+1}= p^k + sS(u^{k+1}- u^k),\label{pdn2}\\
p^{k+1}=\arg\max_{p\in L^2(Q)}\{-f^*(p)-\frac{1}{2s}\|p-(\lambda^{k+1}+sSu^{k+1})\|^2\}\label{pdn3}.
\end{numcases}
Next, taking $w=\lambda^{k+1} + sSu^{k+1}$ and $\phi=f^*(p)$ in  (\ref{Moreau}), we obtain that
\begin{equation}
\begin{aligned}
\label{moreau2}
\lambda^{k+1} + sSu^{k+1} =  & \arg \min_{p \in L^2(Q)}\left\{f^{*}(p)-\left(p,  Su^{k+1} \right)+\frac{1}{2 s}\left\|p- \lambda^{k+1}\right\|^{2}\right\}  \\
& + s \arg\min_{y \in L^2(Q)} \left\{f(y)- \left(y, s Su^{k+1} \right) +  \frac{s}{2}\| y - \frac{1}{s} \lambda^{k+1}  \|^2 \right\}.
\end{aligned}
\end{equation}
Clearly, the first term of the right-hand side is exactly $p^{k+1}$ obtained by (\ref{pdn3}). Additionally, we introduce
$$y^{k+2}  = \arg \min_{y \in L^2(Q)}\left\{f(y)-\left(y,  sSu^{k+1} \right)+ \frac{s}{2}\left\|y- \frac{1}{s}\lambda^{k+1}\right\|^{2}\right\}. $$
Then, (\ref{moreau2}) can be rewritten as
$
 \lambda^{k+1} + sSu^{k+1} = p^{k+1} +  sy^{k+2},
$
which implies that $ p^k =  \lambda^k + s(Su^{k} - y^{k+1})$. Substituting  this result into (\ref{pdn2}) and (\ref{pdn3}) to elmiminate $p^{k}$ and $p^{k+1}$, we thus have
\begin{numcases}
~ u^{k+1}  =\arg \min_{u\in L^2(\mathcal{O})}\left\{g(u)+\left(\lambda^k + s(Su^{k} - y^{k+1}) , S u\right)+\frac{1}{2 r}\left\|u-u^{k}\right\|^{2}\right\},\label{b1}\\
 \lambda^{k+1} = \lambda^k + s(Su^{k+1} - y^{k+1}),\label{b2} \\
 y^{k+2}  = \arg \min_{y \in L^2(Q)}\left\{f(y)-\left(y,  sSu^{k+1} \right)+ \frac{s}{2}\left\|y- \frac{1}{s}\lambda^{k+1}\right\|^{2}\right\}\label{b3} .
\end{numcases}
Swapping the order such that the update of $y$ comes first, we get the linearized ADMM (\ref{alg}) directly.

\subsection{Convergence}
In this subsection, we prove the convergence of the primal-dual method (\ref{pd1})-(\ref{pd3}) with the enlarged step sizes (\ref{o_con_bound}) in form of the linearized ADMM (\ref{alg}).

We first see that, for any $y \in L^2(Q)$ and $u\in L^2(\mathcal{O})$, the iterate $(y^{k+1}, u^{k+1}, \lambda^{k+1})^\top$ generated by the linearized ADMM (\ref{alg}) satisfies the following VIs:
{
	\begin{eqnarray}
		\left(y - y^{k+1}, y^k-y_d-s(Su^k -  y^{k+1})- \lambda^k \right) \geq 0, \label{VIorg1} \\
		\theta(u)-\theta(u^{k+1})+(u -  u^{k+1},  \alpha u^{k+1} + S^*\left(\lambda^k + s(Su^{k} - y^{k+1})\right)+ \frac{1}{r}( u^{k+1} - u^k) ) \geq 0,  \label{VIorg2} \\
		\frac{1}{s}(\lambda^{k+1} - \lambda^k) - (Su^{k+1} - y^{k+1}) = 0. \label{VIorg3}
	\end{eqnarray}
}
Though (\ref{alg}) is no more related to $p^k$, for the convenience of further analysis, we still denote
\begin{equation}
	\label{defpk}
	p^{k}= \lambda^k + s(Su^{k} - y^{k+1}).
\end{equation}
Substituting it into (\ref{VIorg1})--(\ref{VIorg3}) yields
\begin{equation}
	\label{VIkplus1}
	\left \{ \begin{aligned}
		& \left(y - y^{k+1},y^{k+1}-y_d -p^{k} \right) \geq 0, \forall y \in L^2(Q), \\
		& \theta(u)-\theta(u^{k+1})+\left(u -  u^{k+1}, \alpha u^{k+1} + S^*p^k + \frac{1}{r}( u^{k+1} - u^k) \right) \geq 0, \forall u\in L^2(\mathcal{O}), \\
		& \left(\lambda - p^{k},  \frac{1}{s}(\lambda^{k+1} - \lambda^k) - (Su^{k+1} - y^{k+1})\right) \geq 0, \forall \lambda \in L^2(Q).
	\end{aligned}
	\right.
\end{equation}
Next, we present some useful lemmas.

\begin{lemma}
	Let $\{(u^k, y^k, \lambda^k)^\top\}$ be the sequence generated by the linearized ADMM (\ref{alg}) and $(u^*, y^*, \lambda^*)^\top$ the solution of (\ref{saddle_admm}). We have
	\begin{equation}
	{\begin{aligned}
	\label{seqU}
	(S (u^{k+1}- u^k), \lambda^{k+1}-\lambda^k ) \geq \frac{1}{r}( u^{k+1} - u^*, u^{k +1} - u^k) + \frac{1}{s}( \lambda^{k+1} - \lambda^*,\lambda^{k+1} - \lambda^k) + \alpha \|u^{k+1}-u^*\|^2 .
	\end{aligned}}
	\end{equation}
\end{lemma}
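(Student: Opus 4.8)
The plan is to pit the three variational inequalities in (\ref{VIkplus1}), satisfied by the iterate $(y^{k+1},u^{k+1},\lambda^{k+1})^\top$, against the optimality conditions (\ref{opt})--(\ref{VIopt}) of the saddle point $(u^*,y^*,\lambda^*)^\top$, and then to eliminate the auxiliary variable $p^k$ using its relation to $\lambda^{k+1}$. The identity that makes everything fit is that, by (\ref{defpk}) together with the multiplier update in (\ref{alg}) (equivalently (\ref{VIorg3})), one has $p^k=\lambda^{k+1}-sS(u^{k+1}-u^k)$; this is exactly what turns an estimate on $(p^k-\lambda^*,\lambda^{k+1}-\lambda^k)$ into the two inner products appearing in (\ref{seqU}).

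First I would take $u=u^*$ in the $u$-inequality of (\ref{VIkplus1}) and $u=u^{k+1}$ in the $u$-inequality of (\ref{VIopt}); adding them cancels the $\theta$-terms by monotonicity of $\partial\theta$, and after rearranging (and using $(u^{k+1}-u^*,S^*(p^k-\lambda^*))=(S(u^{k+1}-u^*),p^k-\lambda^*)$) one gets
\[
(S(u^{k+1}-u^*),\,p^k-\lambda^*)\ \le\ -\alpha\|u^{k+1}-u^*\|^2-\frac{1}{r}(u^{k+1}-u^*,\,u^{k+1}-u^k).
\]
Next, taking $y=y^*$ in the $y$-inequality of (\ref{VIkplus1}) and combining with the equality $y^*-y_d-\lambda^*=0$ from (\ref{opt}), one obtains $(y^{k+1}-y^*,\,p^k-\lambda^*)\ge\|y^{k+1}-y^*\|^2\ge0$. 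Finally, taking $\lambda=\lambda^*$ in the $\lambda$-inequality of (\ref{VIkplus1}) and using the feasibility $Su^*=y^*$ from (\ref{opt}) to write $Su^{k+1}-y^{k+1}=S(u^{k+1}-u^*)-(y^{k+1}-y^*)$, one arrives at
\[
\frac{1}{s}(p^k-\lambda^*,\,\lambda^{k+1}-\lambda^k)\ \le\ (S(u^{k+1}-u^*),\,p^k-\lambda^*)-(y^{k+1}-y^*,\,p^k-\lambda^*).
\]

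Now I would chain the three estimates: the last display bounds $\frac{1}{s}(p^k-\lambda^*,\lambda^{k+1}-\lambda^k)$ by $(S(u^{k+1}-u^*),p^k-\lambda^*)$, since $(y^{k+1}-y^*,p^k-\lambda^*)\ge0$, and the first display then bounds that in turn by $-\alpha\|u^{k+1}-u^*\|^2-\frac{1}{r}(u^{k+1}-u^*,u^{k+1}-u^k)$. Substituting $p^k-\lambda^*=(\lambda^{k+1}-\lambda^*)-sS(u^{k+1}-u^k)$ on the left, so that $(p^k-\lambda^*,\lambda^{k+1}-\lambda^k)=(\lambda^{k+1}-\lambda^*,\lambda^{k+1}-\lambda^k)-s(S(u^{k+1}-u^k),\lambda^{k+1}-\lambda^k)$, multiplying through by $s$ and rearranging, produces (\ref{seqU}) exactly. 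The argument is essentially bookkeeping; the only points that require care are keeping the directions of the inequalities straight when the solution VIs are added to the iterate VIs, and noticing that the $y$-inequality must be used only in the weak form $(y^{k+1}-y^*,p^k-\lambda^*)\ge0$ — discarding a nonnegative term proportional to $\|y^{k+1}-y^*\|^2$ — which is precisely why (\ref{seqU}) is an inequality rather than an identity.
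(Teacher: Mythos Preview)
Your proof is correct and follows essentially the same approach as the paper: both pair the optimality VIs (\ref{VIopt}) with the iterate VIs (\ref{VIkplus1}) at the test points $(y^*,u^*,\lambda^*)$ and $(y^{k+1},u^{k+1},p^k)$, then substitute $p^k=\lambda^{k+1}-sS(u^{k+1}-u^k)$ to finish. The only cosmetic difference is that the paper adds the three paired inequalities all at once so the cross terms $(S(u^{k+1}-u^*),p^k-\lambda^*)$ and $(y^{k+1}-y^*,p^k-\lambda^*)$ cancel directly, whereas you chain them sequentially and explicitly discard the nonnegative $\|y^{k+1}-y^*\|^2$ term --- the same slack the paper drops implicitly.
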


\begin{proof}
	
	First, taking $(y,u,\lambda)^\top=(y^{k+1}, u^{k+1}, p^{k})^\top$  in (\ref{VIopt}) and $(y,u,\lambda)^\top =  (y^*, u^*, \lambda^*)^\top$ in (\ref{VIkplus1}), respectively, and adding them together, we get
	\begin{equation*}
	\left \{ \begin{aligned}
	& \left(y^* -  y^{k+1}, -p^{k} + \lambda^* \right) \geq 0,\\
	& \left(u^* - u^{k+1}, \alpha u^{k+1} - \alpha u^* + S^* ( p^{k} - \lambda^*) + \frac{1}{r} (u^{k+1} - u^k)\right) \geq 0, \\
	& \left(\lambda^* -  p^{k},   \frac{1}{s}(\lambda^{k+1} - \lambda^k) - S(u^{k+1}-u^*) + (y^{k+1} - y^*) \right) \geq 0.
	\end{aligned}
	\right.
	\end{equation*}
Adding the above three inequalities together, we have
	\begin{equation}\label{s2}
	\frac{1}{r}(u^* -  u^{k+1},  u^{k+1} - u^k) + \frac{1}{s} (\lambda^* - p^k,  \lambda^{k+1}  - \lambda^k) \geq  \alpha \|u^{k+1} - u^* \|^2.
	\end{equation}
From (\ref{b2}) and  (\ref{defpk}), we have $p^{k} = \lambda^{k+1} - s S(u^{k+1}-u^k) $. Then, the desired result (\ref{seqU}) follows from (\ref{s2}) directly.
\end{proof}

\begin{lemma}
Let $\{(u^k, y^k, \lambda^k)^\top\}$ be the sequence generated by the linearized ADMM (\ref{alg}), and $(u^*, y^*, \lambda^*)^\top$ the solution point of (\ref{saddle_admm}). Then, we have
	\begin{equation}
	\label{main_eq}
	\begin{aligned}
	&2(S(u^{k+1}- u^k),\lambda^{k+1}-\lambda^k  ) \\ \geq& [(\frac{1}{r} + \alpha ) \|u^{k+1}- u^*\|^2  + \frac{1}{s}\|\lambda^{k+1}- \lambda^*\|^2]  - [(\frac{1}{r} + \alpha )\|u^{k}- u^*\|^2 + \frac{1}{s}\|\lambda^k- \lambda^*\|^2] \\
	& + [(\frac{1}{r} + \frac{\alpha}{2} )\|u^{k+1}- u^{k}\|^2) + \frac{1}{s}\|\lambda^{k+1}- \lambda^k\|^2)]. \\
	\end{aligned}
	\end{equation}
\end{lemma}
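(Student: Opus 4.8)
The plan is to upgrade the inequality \eqref{seqU} of the previous lemma into the ``descent-plus-contraction'' form \eqref{main_eq} by the standard identity that converts inner products of consecutive iterates into differences of squared norms. Concretely, I would start from \eqref{seqU}, multiply it by $2$, and treat each of the three terms on the right-hand side separately. For the term $\frac{2}{r}(u^{k+1}-u^*,u^{k+1}-u^k)$ I would apply the elementary identity $2(a-b,a-c)=\|a-b\|^2-\|c-b\|^2+\|a-c\|^2$ with $a=u^{k+1}$, $b=u^*$, $c=u^k$, obtaining $\frac{1}{r}(\|u^{k+1}-u^*\|^2-\|u^k-u^*\|^2+\|u^{k+1}-u^k\|^2)$; similarly for the $\lambda$-term with step size $s$, giving $\frac{1}{s}(\|\lambda^{k+1}-\lambda^*\|^2-\|\lambda^k-\lambda^*\|^2+\|\lambda^{k+1}-\lambda^k\|^2)$. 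The remaining term is $2\alpha\|u^{k+1}-u^*\|^2$, which I would split as $\alpha\|u^{k+1}-u^*\|^2+\alpha\|u^{k+1}-u^*\|^2$ and keep one copy intact while, for the other copy, use $\|u^{k+1}-u^*\|^2 \ge \|u^k-u^*\|^2 ... $ — actually the cleaner route is to write $2\alpha\|u^{k+1}-u^*\|^2 = \alpha\|u^{k+1}-u^*\|^2 - \alpha\|u^k-u^*\|^2 + \alpha(\|u^{k+1}-u^*\|^2+\|u^k-u^*\|^2)$ and then bound $\|u^{k+1}-u^*\|^2+\|u^k-u^*\|^2 \ge \tfrac12\|u^{k+1}-u^k\|^2$ by the parallelogram/convexity inequality $\|x\|^2+\|y\|^2\ge\tfrac12\|x-y\|^2$.

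Collecting terms, the coefficient of $\|u^{k+1}-u^*\|^2$ on the ``new'' side becomes $\frac1r+\alpha$, matched by $-(\frac1r+\alpha)\|u^k-u^*\|^2$ on the ``old'' side; the $\lambda$ coefficients are $\frac1s$ on both sides as required; and the cross-term $\|u^{k+1}-u^k\|^2$ picks up $\frac1r$ from the first identity plus $\frac{\alpha}{2}$ from the parallelogram bound on the split $\alpha$-term, yielding exactly the coefficient $\frac1r+\frac{\alpha}{2}$ in \eqref{main_eq}, while $\|\lambda^{k+1}-\lambda^k\|^2$ carries coefficient $\frac1s$. This reproduces every term of \eqref{main_eq} verbatim, so the proof is essentially a bookkeeping exercise once the right split of the $2\alpha\|u^{k+1}-u^*\|^2$ term is chosen.

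The only real decision point — and hence the ``main obstacle'' — is precisely how to dispose of the strong-convexity surplus $\alpha\|u^{k+1}-u^*\|^2$: one must retain enough of it to reach the coefficient $\frac1r+\alpha$ on the positive squared-distance terms (this is what ultimately buys the enlarged step-size bound \eqref{o_con_bound}), while simultaneously feeding $\frac{\alpha}{2}$ of it into the $\|u^{k+1}-u^k\|^2$ slack term. Using the bound $\|u^{k+1}-u^*\|^2+\|u^k-u^*\|^2\ge\tfrac12\|u^{k+1}-u^k\|^2$ is what makes the two demands compatible; any coarser estimate would lose the $\frac{\alpha}{2}$ in the last bracket or the $\alpha$ in the first. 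After that, no inequality is tight in a delicate way and the computation is routine. I would therefore present the proof as: (i) quote \eqref{seqU}; (ii) state the two elementary identities and the parallelogram inequality; (iii) substitute and regroup; (iv) read off \eqref{main_eq}.
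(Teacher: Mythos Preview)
Your proposal is correct and matches the paper's proof essentially line for line: the paper also multiplies \eqref{seqU} by $2$, applies the polarization identity $2(a-b,a-c)=\|a-b\|^2-\|c-b\|^2+\|a-c\|^2$ to both the $u$- and $\lambda$-terms, and handles $2\alpha\|u^{k+1}-u^*\|^2$ via the same split and the bound $\|u^{k+1}-u^*\|^2+\|u^k-u^*\|^2\ge\tfrac12\|u^{k+1}-u^k\|^2$. The only cosmetic difference is that the paper labels this last inequality ``Cauchy--Schwarz'' rather than ``parallelogram/convexity,'' but the content is identical.
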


\begin{proof}
	We first note that
	\begin{equation}\label{s3}
	2(u^{k+1} - u^*,  u^{k+1} - u^{k})  = (\|u^{k+1}- u^*\|^2 - \|u^{k}- u^*\|^2) + \|u^{k+1} - u^k \|^2,
	\end{equation}
	and
	\begin{equation}\label{s4}
	2(\lambda^{k+1} - \lambda^*,  \lambda^{k+1} - \lambda^k)   = (\|\lambda^{k+1}- \lambda^*\|^2 - \|\lambda^k- \lambda^*\|^2) + \|\lambda^{k+1} - \lambda^k \|^2.
	\end{equation}
	By the Cauchy-Schwarz inequality, we have
	\begin{equation}\label{s5}
	 \begin{aligned}
	2\|u^{k+1} - u^*\|^2 =& (\|u^{k+1}- u^*\|^2 - \|u^{k}- u^*\|^2) + (\|u^{k+1}- u^*\|^2 + \|u^{k}- u^*\|^2) \\
	& \geq (\|u^{k+1}- u^*\|^2 - \|u^{k}- u^*\|^2) + \frac{1}{2}\|u^{k+1}- u^{k}\|^2.
	\end{aligned} \end{equation}
Substituting the inequalities (\ref{s3}), (\ref{s4}) and (\ref{s5}) into (\ref{seqU}), we obtain the desired result (\ref{main_eq}) directly.
\end{proof}

For convenience, we introduce the notations
\begin{equation}\label{def_d_sig}
D = \frac{1}{r} I - \frac{3 s}{4 + 2 \alpha r} S^*S,~\text{and}~	\sigma = \frac{2 + 4\alpha r}{2 + \alpha r} s.
\end{equation}
It is easy to verify that $D$ is self-adjoint and positive definite under the condition (\ref{o_con_bound}). Moreover, it holds that
\begin{equation}\label{s1}
\frac{1}{r}I - s S^* S =  D - \frac{1}{4} \sigma S^* S.
\end{equation}
With the above result, we can obtain the following estimate.
\begin{lemma}
	Let $\{(u^k, y^k, \lambda^k)^\top\}$ be the sequence generated by the linearized ADMM (\ref{alg}). We have that
	\begin{equation}
	\label{pTAx1}
		{\begin{aligned}
-  (S(u^{k+1}- u^{k}),&\lambda^{k+1}  - \lambda^k)
	\geq   \left[\frac{1}{2} \|u^{k+1} - u^{k}\|^2_D + \frac{1}{8}  \sigma \|S(u^{k+1} - u^{k})\|^2 \right] +  \alpha \|u^{k+1} - u^k \|^2  \\
	& - \left[ \frac{1}{2} \|u^{k} - u^{k-1}\|^2_D + \frac{1}{8} \sigma \|S(u^{k} - u^{k-1})\|^2 \right] - \frac{1}{2} \sigma  \|S(u^{k+1} - u^{k})\|^2.
	\end{aligned}}
	\end{equation}
\end{lemma}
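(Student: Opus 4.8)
The plan is to derive (\ref{pTAx1}) from the $u$-subproblem optimality condition in (\ref{VIkplus1}) taken at two consecutive indices, after re-expressing the dual variable through the relation $p^k = \lambda^{k+1} - sS(u^{k+1}-u^k)$, which is an immediate consequence of (\ref{b2}) and (\ref{defpk}).

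First I would write the second inequality in (\ref{VIkplus1}) at index $k$ with the test point $u=u^k$, and at index $k-1$ with the test point $u=u^{k+1}$, and add them. The $\theta$-contributions $\theta(u^k)-\theta(u^{k+1})$ and $\theta(u^{k+1})-\theta(u^k)$ cancel, and after grouping the $\alpha$-terms and the $S^*$-terms one is left with
\[
-\alpha\|u^{k+1}-u^k\|^2 - \bigl(S(u^{k+1}-u^k),\, p^k-p^{k-1}\bigr) - \frac{1}{r}\|u^{k+1}-u^k\|^2 + \frac{1}{r}\bigl(u^{k+1}-u^k,\, u^k-u^{k-1}\bigr)\ \geq\ 0 .
\]
Substituting $p^k-p^{k-1} = (\lambda^{k+1}-\lambda^k) - sS(u^{k+1}-u^k) + sS(u^k-u^{k-1})$ and isolating the cross term, this rearranges into the lower bound
\[
-\bigl(S(u^{k+1}-u^k),\,\lambda^{k+1}-\lambda^k\bigr)\ \geq\ \alpha\|u^{k+1}-u^k\|^2 + \|u^{k+1}-u^k\|_M^2 - \bigl(u^{k+1}-u^k,\, M(u^k-u^{k-1})\bigr),
\]
where $M := \frac{1}{r}I - sS^*S$ and $\|v\|_M^2 := (v, Mv)$.

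Next I would invoke the identity (\ref{s1}), namely $M = D - \frac{1}{4}\sigma S^*S$, to split each $M$-term into a $D$-part and an $S^*S$-part, and then control the remaining cross terms by the Cauchy--Schwarz and Young inequalities: since $D$ is self-adjoint and positive definite under (\ref{o_con_bound}), one has $(u^{k+1}-u^k,\, D(u^k-u^{k-1})) \le \frac{1}{2}\|u^{k+1}-u^k\|_D^2 + \frac{1}{2}\|u^k-u^{k-1}\|_D^2$, and likewise $\frac{1}{4}\sigma(S(u^{k+1}-u^k),\, S(u^k-u^{k-1})) \ge -\frac{1}{8}\sigma\|S(u^{k+1}-u^k)\|^2 - \frac{1}{8}\sigma\|S(u^k-u^{k-1})\|^2$ (using $\sigma>0$). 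Collecting everything, the coefficient of $\|S(u^{k+1}-u^k)\|^2$ becomes $-\frac{1}{4}\sigma - \frac{1}{8}\sigma = -\frac{3}{8}\sigma$ and that of $\|S(u^k-u^{k-1})\|^2$ becomes $-\frac{1}{8}\sigma$, which, after writing $-\frac{3}{8}\sigma = \frac{1}{8}\sigma - \frac{1}{2}\sigma$, reproduces exactly the right-hand side of (\ref{pTAx1}).

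The computation is otherwise routine; the one thing to watch is the bookkeeping of constants, since the specific choices of $D$ and $\sigma$ in (\ref{def_d_sig}) are calibrated precisely so that the two Young estimates leave behind the coefficients $\frac{1}{2}$ (for the telescoping $D$-terms) and $\frac{1}{8}\sigma$ (for the telescoping $S^*S$-terms) appearing in (\ref{pTAx1}). One also needs $k\ge 1$ so that the shifted optimality condition at index $k-1$ is available, and $\sigma>0$ (which holds since $\alpha, r, s>0$) for the final Young step.
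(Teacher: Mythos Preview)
Your proposal is correct and follows essentially the same route as the paper. The only cosmetic difference is that the paper first substitutes (\ref{VIorg3}) into (\ref{VIorg2}) to write the $u$-optimality condition directly in terms of $\lambda^{k+1}$ (obtaining $\alpha u^{k+1} + S^*\lambda^{k+1} + (\frac{1}{r}I - sS^*S)(u^{k+1}-u^k)$ inside the inner product), whereas you keep the $p^k$-form and substitute $p^k-p^{k-1}$ afterwards; both manipulations are equivalent via $p^k=\lambda^{k+1}-sS(u^{k+1}-u^k)$ and lead to the same inequality before the identical Cauchy--Schwarz/Young step.
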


\begin{proof}
	Substituting (\ref{VIorg3}) into (\ref{VIorg2}) to eliminate $y^{k+1}$ , we have
	{
	\begin{equation}
	\label{xkplus1}
	\theta(u)-\theta(u^{k+1})+\left (u - u^{k+1}, \alpha u^{k+1} + S^* \lambda^{k+1} + (\frac{1}{r}I  - s S^*S )(u^{k+1} - u^k) \right) \geq 0, \forall u\in L^2(\mathcal{O}).
	\end{equation}
}
    We relabel the superscript $k+1$ as $k$ in the above VI and obtain
	\begin{equation}
	\label{xk}
	\theta(u)-\theta(u^{k})+\left (u - u^{k}, \alpha u^{k} + S^* \lambda^k + (\frac{1}{r}I  - s S^*S )(u^{k} - u^{k-1}) \right) \geq 0, \forall u\in L^2(\mathcal{O}).
	\end{equation}
	Taking $u = u^k$ in (\ref{xkplus1}) and $u = u^{k+1}$ in (\ref{xk}), and adding the resulting two inequalities, we obtain
	\begin{equation}\label{s6}
	\begin{aligned}
	&-  \left( S(u^{k+1}- u^{k}),\lambda^{k+1}  - \lambda^k\right) \\
	 \geq & \left(u^{k+1} - u^{k},  (\frac{1}{r}I  - s S^* S )\left[(u^{k+1} - u^{k}) - (u^k - u^{k-1})\right] \right) +  \alpha \|u^{k+1} - u^k \|^2 \\
	 \overset{(\ref{s1})}{=}&  \left(u^{k+1} - u^{k},  (D -\frac{1}{4}\sigma S^* S )\left[(u^{k+1} - u^{k}) - (u^k - u^{k-1})\right] \right) +  \alpha \|u^{k+1} - u^k \|^2 \\
	 =&  \|u^{k+1} - u^{k}\|^2_D -  (u^{k+1} - u^{k}, D (u^k - u^{k-1})) - \frac{1}{4}  \sigma  \|S(u^{k+1} - u^{k})\|^2\\
	& \qquad + \frac{1}{4}  \sigma \left(S(u^{k+1} - u^{k}),  S (u^k - u^{k-1})\right) +  \alpha \|u^{k+1} - u^k \|^2 .
	\end{aligned}
	\end{equation}
	Applying the Cauchy-Schwarz inequality to (\ref{s6}), we have
	\begin{equation*}
	\begin{aligned}
	-  \left( S (u^{k+1}- u^{k}),\lambda^{k+1}  - \lambda^k \right)
	\geq ~ & \frac{1}{2} \|u^{k+1} - u^{k}\|^2_D - \frac{1}{2} \|u^{k} - u^{k-1}\|^2_D \\
	&\hspace{-2cm} - \frac{3}{8} \sigma  \|S(u^{k+1} - u^{k})\|^2 - \frac{1}{8}  \sigma \|S(u^{k} - u^{k-1})\|^2  +  \alpha \|u^{k+1} - u^k \|^2,
	\end{aligned}
	\end{equation*}
	which is just the desired result (\ref{pTAx1}).
\end{proof}


\begin{lemma}
	Let $\{(u^k, y^k, \lambda^k)^\top\}$ be the sequence generated by the linearized ADMM (\ref{alg}). Then, for any $\delta \in (0, \frac{1}{2s})$, we have
	\begin{equation}
	\label{pTAx2}
	- \left( S(u^{k+1}- u^{k}),\lambda^{k+1}  - \lambda^k\right) \geq
	- \frac{1}{4} s(1 + 2s\delta) \|S(u^{k+1}- u^k)\|^2 - \frac{1}{s} (1-s\delta) \|\lambda^k - \lambda^{k+1} \|^2.
	\end{equation}
\end{lemma}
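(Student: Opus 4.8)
The plan is to estimate the single cross term $-(S(u^{k+1}-u^{k}),\lambda^{k+1}-\lambda^{k})$ on its own, by Cauchy--Schwarz followed by a weighted Young inequality whose weight is tuned so that the two resulting coefficients match those in (\ref{pTAx2}). In contrast to the preceding lemmas, no information about the iteration (\ref{alg}) is needed here: the estimate is a purely algebraic inequality between the two $L^2(Q)$ vectors $S(u^{k+1}-u^{k})$ and $\lambda^{k+1}-\lambda^{k}$, so the only inputs are the hypothesis $\delta\in(0,\frac{1}{2s})$ and elementary inequalities.

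First I would write, by Cauchy--Schwarz, $-(S(u^{k+1}-u^{k}),\lambda^{k+1}-\lambda^{k}) \ge -\|S(u^{k+1}-u^{k})\|\,\|\lambda^{k+1}-\lambda^{k}\|$, and then, for an arbitrary weight $\eta>0$, apply $ab \le \frac{\eta}{2}a^{2}+\frac{1}{2\eta}b^{2}$ with $a=\|S(u^{k+1}-u^{k})\|$ and $b=\|\lambda^{k+1}-\lambda^{k}\|$ to obtain $-(S(u^{k+1}-u^{k}),\lambda^{k+1}-\lambda^{k}) \ge -\frac{\eta}{2}\|S(u^{k+1}-u^{k})\|^{2}-\frac{1}{2\eta}\|\lambda^{k+1}-\lambda^{k}\|^{2}$. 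Choosing $\eta=\frac{1}{2}s(1+2s\delta)$ makes the first coefficient exactly $\frac{1}{4}s(1+2s\delta)$, as required, and the second coefficient becomes $\frac{1}{2\eta}=\frac{1}{s(1+2s\delta)}$.

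The one computation to carry out is the verification that this second coefficient does not exceed $\frac{1}{s}(1-s\delta)$, i.e.\ that $\frac{1}{1+2s\delta}\le 1-s\delta$, which is equivalent to $1\le(1-s\delta)(1+2s\delta)=1+s\delta(1-2s\delta)$; this holds because $\delta\in(0,\frac{1}{2s})$ forces $s\delta>0$ and $1-2s\delta>0$. Substituting this bound into the displayed inequality and using $\|\lambda^{k+1}-\lambda^{k}\|=\|\lambda^{k}-\lambda^{k+1}\|$ gives (\ref{pTAx2}) directly. I do not anticipate any real obstacle here; the only point requiring a little care is the choice of $\eta$: in fact any $\eta\in[\frac{s}{2(1-s\delta)},\,\frac{1}{2}s(1+2s\delta)]$ works, and this interval is nonempty precisely under the stated hypothesis on $\delta$, which is what makes the two prescribed coefficients in (\ref{pTAx2}) mutually consistent.
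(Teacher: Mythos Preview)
Your proposal is correct and follows essentially the same approach as the paper: Cauchy--Schwarz followed by a weighted Young inequality, with the elementary bound $(1-s\delta)(1+2s\delta)\ge 1$ for $\delta\in(0,\tfrac{1}{2s})$ used to reconcile the two coefficients. The only cosmetic difference is that the paper chooses the Young weight to match the $\lambda$-coefficient $\tfrac{1}{s}(1-s\delta)$ exactly and then bounds $\tfrac{s}{4(1-s\delta)}\le \tfrac{1}{4}s(1+2s\delta)$, whereas you match the $S(u^{k+1}-u^k)$-coefficient and bound the other term; these are two endpoints of the admissible interval for $\eta$ you identified.
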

\begin{proof}
	By the Cauchy-Schwarz inequality, we have
	\begin{equation*}
	- \left( S(u^{k+1}- u^{k}),\lambda^{k+1}  - \lambda^k\right) \geq
	- \frac{s}{4 (1 -s\delta)} \|S(u^{k+1}- u^k)\|^2 - \frac{1}{s} (1-s\delta) \|\lambda^k - \lambda^{k+1} \|^2.
	\end{equation*}
	For $\delta \in (0, \frac{1}{2s})$, we have $0<\frac{1}{1- s\delta} \leq 1 + 2s\delta$ and thus complete the proof.
\end{proof}

By adding (\ref{main_eq}), (\ref{pTAx1}) and (\ref{pTAx2}), we obtain that
\begin{equation}
\label{left0}
\begin{aligned}
0  \geq& \left[(\frac{1}{r} + \alpha ) \|u^{k+1}- u^*\|^2  + \frac{1}{s}\|\lambda^{k+1}- \lambda^*\|^2 + \frac{1}{2} \|u^{k+1} - u^{k}\|^2_D + \frac{1}{8}  \sigma \|S(u^{k+1} - u^{k})\|^2  \right]    \\
& - \left[(\frac{1}{r} + \alpha )\|u^{k}- u^*\|^2 + \frac{1}{s}\|\lambda^k- \lambda^*\|^2 + \frac{1}{2} \|u^{k} - u^{k-1}\|^2_D + \frac{1}{8} \sigma \|S(u^{k} - u^{k-1})\|^2  \right] \\
& + [(\frac{1}{r} + \frac{3\alpha}{2} )\|u^{k+1}- u^{k}\|^2)   - \frac{1}{4} [2\sigma + s(1 + 2s\delta)] \|S(u^{k+1}- u^k)\|^2  + \delta \|\lambda^k - \lambda^{k+1} \|^2.
\end{aligned}
\end{equation}
To simplify the notations, we introduce
\begin{equation}\label{def_E}
E_k = (\frac{1}{r} + \alpha )\|u^{k}- u^*\|^2 + \frac{1}{s}\|\lambda^k- \lambda^*\|^2 + \frac{1}{2} \|u^{k} - u^{k-1}\|^2_D + \frac{1}{8} \sigma \|S(u^{k} - u^{k-1})\|^2,
\end{equation}
and
\begin{equation*}
V_{k+1} = \delta (\|u^{k+1} - u^k \|^2 + \|\lambda^{k+1} - \lambda^k \|^2).
\end{equation*}
Next, we intend to show that
$
E_{k+1} \leq E_{k} - V_{k+1},
$
which implies $\sum_{k=1}^{\infty} V_k < + \infty$.
Then, the convergence of (\ref{alg}) can be proved from this result.

\begin{lemma}
Let $\{(u^k, y^k, \lambda^k)^\top\}$ be the sequence generated by the linearized ADMM (\ref{alg}) and $(u^*, y^*, \lambda^*)^\top$ the solution point of (\ref{saddle_admm}). We have
	\begin{equation}\label{MainRes}
	E_{k+1}\leq E_k-V_{k+1}.
	\end{equation}
\end{lemma}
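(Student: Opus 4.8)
The target inequality $E_{k+1}\le E_k - V_{k+1}$ is exactly the estimate (\ref{left0}) rearranged, provided the trailing terms on the right of (\ref{left0}) dominate $-V_{k+1} = -\delta(\|u^{k+1}-u^k\|^2 + \|\lambda^{k+1}-\lambda^k\|^2)$. So the plan is: first recall that (\ref{left0}) was already obtained by adding the three lemma estimates (\ref{main_eq}), (\ref{pTAx1}), (\ref{pTAx2}); then recognize the bracketed quantities in (\ref{left0}) as $E_{k+1}$ and $E_k$ in the notation (\ref{def_E}); and finally check that the remaining ``leftover'' line
$$
(\tfrac{1}{r} + \tfrac{3\alpha}{2})\|u^{k+1}-u^k\|^2 - \tfrac14\bigl[2\sigma + s(1+2s\delta)\bigr]\|S(u^{k+1}-u^k)\|^2 + \delta\|\lambda^k-\lambda^{k+1}\|^2
$$
is bounded below by $\delta\|u^{k+1}-u^k\|^2 + \delta\|\lambda^{k+1}-\lambda^k\|^2 = V_{k+1}$. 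The $\delta\|\lambda^{k+1}-\lambda^k\|^2$ terms match on both sides, so it suffices to show
$$
(\tfrac{1}{r} + \tfrac{3\alpha}{2} - \delta)\|u^{k+1}-u^k\|^2 \ \ge\ \tfrac14\bigl[2\sigma + s(1+2s\delta)\bigr]\|S(u^{k+1}-u^k)\|^2.
$$

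Using $\|S(u^{k+1}-u^k)\|^2 \le \|S\|^2 \|u^{k+1}-u^k\|^2$, it is enough to establish the scalar inequality
$$
\tfrac{1}{r} + \tfrac{3\alpha}{2} - \delta \ \ge\ \tfrac14\bigl[2\sigma + s(1+2s\delta)\bigr]\|S\|^2.
$$
Here one substitutes $\sigma = \frac{2+4\alpha r}{2+\alpha r}s$ from (\ref{def_d_sig}) and uses the step-size condition (\ref{o_con_bound}), i.e. $s\|S\|^2 < \frac{4+2\alpha r}{3r}$. The key point is that the condition (\ref{o_con_bound}) was engineered precisely so that the strict inequality $\tfrac{1}{r} + \tfrac{3\alpha}{2} > \tfrac14(2\sigma + s)\|S\|^2$ holds; that is, the ``gap'' at $\delta = 0$ is strictly positive. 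Indeed, plugging $s\|S\|^2 = \frac{4+2\alpha r}{3r}$ (the boundary case) into $\tfrac14(2\sigma + s)\|S\|^2$ should reduce, after simplification using the given form of $\sigma$, to exactly $\tfrac{1}{r} + \tfrac{3\alpha}{2}$ — so the strict inequality (\ref{o_con_bound}) yields a strictly positive slack $\epsilon := \tfrac{1}{r} + \tfrac{3\alpha}{2} - \tfrac14(2\sigma+s)\|S\|^2 > 0$.

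Then I would choose $\delta > 0$ small enough that both $\delta < \tfrac{1}{2s}$ (so Lemma with (\ref{pTAx2}) applies) and the perturbation caused by $\delta$ is absorbed: the dependence on $\delta$ in the required inequality is $-\delta\|u^{k+1}-u^k\|^2$ on the left and $-\tfrac{s^2\delta}{2}\|S(u^{k+1}-u^k)\|^2$ contributing on the right, i.e. a linear-in-$\delta$ error of size at most $(1 + \tfrac{s^2\|S\|^2}{2})\delta \|u^{k+1}-u^k\|^2$, which is $< \epsilon\|u^{k+1}-u^k\|^2$ once $\delta < \epsilon / (1 + \tfrac12 s^2\|S\|^2)$. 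Taking $\delta \in \bigl(0,\ \min\{\tfrac{1}{2s},\ \epsilon/(1+\tfrac12 s^2\|S\|^2)\}\bigr)$ then gives $E_{k+1}\le E_k - V_{k+1}$ and completes the proof.

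**Main obstacle.** The only delicate part is the algebraic verification that the boundary case of (\ref{o_con_bound}) makes $\epsilon = 0$ — i.e., checking that the specific rational expression $\sigma = \frac{2+4\alpha r}{2+\alpha r}s$ together with $s\|S\|^2 = \frac{4+2\alpha r}{3r}$ yields $\tfrac14(2\sigma + s)\|S\|^2 = \tfrac{1}{r} + \tfrac{3\alpha}{2}$. This is a routine but slightly fiddly identity; once it is confirmed, everything else is a soft continuity/smallness argument for $\delta$, and the bound $\|S(u^{k+1}-u^k)\| \le \|S\|\,\|u^{k+1}-u^k\|$ handles the operator terms. I would present the computation of $\epsilon$ explicitly and then state the admissible range for $\delta$.
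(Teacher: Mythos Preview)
Your proposal is correct and follows essentially the same route as the paper: both rearrange (\ref{left0}) using the definitions of $E_k$ and $V_{k+1}$, then show the leftover line dominates $V_{k+1}$ by exploiting the strict step-size condition (\ref{o_con_bound}) to produce a positive gap that absorbs a sufficiently small $\delta\in(0,\tfrac{1}{2s})$. The only cosmetic difference is that the paper works at the operator level---rewriting the leftover as $\|u^{k+1}-u^k\|_D^2+\tfrac{3\alpha r}{2}\|u^{k+1}-u^k\|_D^2-\tfrac{s^2\delta}{2}\|S(u^{k+1}-u^k)\|^2$ via the identity (\ref{Dcoincidence}) and then invoking $D\succ 0$---whereas you pass to scalars immediately via $\|S(u^{k+1}-u^k)\|\le\|S\|\,\|u^{k+1}-u^k\|$; your boundary check $\epsilon=0$ is precisely the scalar shadow of (\ref{Dcoincidence}).
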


\begin{proof}
It follows from the positive definiteness of $D$ that there exists a sufficiently small constant $\delta \in (0, \frac{1}{2s})$ such that
	$ D > \delta (\frac{2}{3\alpha r} I + \frac{s^2}{3 \alpha r}S^*S),$
	which implies that
	\begin{equation}
	\label{assumpD}
	\frac{3\alpha r}{2} \|u^{k+1} - u^k \|^2_D -\frac{s^2\delta}{2} \|S(u^{k+1}- u^k)\|^2 \geq \delta \|u^{k+1}- u^k\|^2.
	\end{equation}
		Recall (\ref{def_d_sig}). We thus have
	\begin{equation}
	\label{DefDx}
	\frac{1}{r} \|u^{k+1} - u^k\|^2 - \frac{3s}{4 + 2 \alpha r} \| S (u^{k+1} - u^k)\|^2 \geq 0.
	\end{equation}
	Moreover, by some simple manipulations, we can show that
	\begin{equation}
	\label{Dcoincidence}
	\frac{3\alpha }{2} \|u^{k+1} - u^k\|^2 + ( \frac{3s}{4 + 2 \alpha r} - \frac{1}{2} \sigma - \frac{1}{4} s) \|S(u^{k+1}- u^k)\|^2
	= \frac{3\alpha r}{2} \|u^{k+1} - u^k \|^2_D.
	\end{equation}
Combining the results (\ref{left0}), (\ref{assumpD}),  (\ref{DefDx}) and (\ref{Dcoincidence}) together, we get (\ref{MainRes}) directly .
\end{proof}

With the help of preceding lemmas, we now prove the strong global convergence of the linearized ADMM (\ref{alg}) under the condition (\ref{o_con_bound}).

\begin{theorem}
	Let $\{(u^k, y^k, \lambda^k)^\top\}$ be the sequence generated by the linearized ADMM (\ref{alg}), and $(u^*, y^*, \lambda^*)^\top$ the solution point of (\ref{saddle_admm}). If $r$ and $s$ satisfy the condition (\ref{o_con_bound}), then $\{u^k\}$ converges to $u^*$ strongly in $L^2(\mathcal{O})$, $\{y^k\}$ converges to $y^*$ strongly in $L^2(Q)$, and $\lambda^k$ converges to $\lambda^*$ strongly in $L^2(Q)$.
\end{theorem}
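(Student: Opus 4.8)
The plan is to read off the convergence from the contraction-type inequality (\ref{MainRes}), upgrading the weak-type estimates to strong convergence by exploiting the strong-convexity term $\alpha\|u^{k+1}-u^*\|^2$ that has been carried through the preceding lemmas. First, since (\ref{MainRes}) gives $E_{k+1}\leq E_k$ with $E_k\geq 0$, the sequence $\{E_k\}$ is nonincreasing and bounded below, hence convergent, and $\sum_{k\geq 1}V_k<+\infty$. In particular $V_k\to 0$, so by the definition of $V_{k+1}$ we get $\|u^{k+1}-u^k\|\to 0$ and $\|\lambda^{k+1}-\lambda^k\|\to 0$. Moreover, since $D$ is positive definite and $\sigma>0$ under (\ref{o_con_bound}), every term in the definition (\ref{def_E}) of $E_k$ is nonnegative, so $(\frac{1}{r}+\alpha)\|u^k-u^*\|^2\leq E_k\leq E_1$ and $\frac{1}{s}\|\lambda^k-\lambda^*\|^2\leq E_k\leq E_1$; thus $\{u^k\}$ and $\{\lambda^k\}$ are bounded in $L^2(\mathcal{O})$ and $L^2(Q)$, respectively. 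From (\ref{VIorg3}) we have $y^{k+1}=Su^{k+1}-\frac{1}{s}(\lambda^{k+1}-\lambda^k)$, so $\{y^{k+1}\}$ is bounded as well; hence so is $\{p^k\}$, recalling $p^k=\lambda^k+s(Su^k-y^{k+1})$ from (\ref{defpk}).

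Next, I would invoke the strong convexity through inequality (\ref{s2}), namely
\[
\frac{1}{r}\left(u^*-u^{k+1},\,u^{k+1}-u^k\right)+\frac{1}{s}\left(\lambda^*-p^k,\,\lambda^{k+1}-\lambda^k\right)\geq \alpha\|u^{k+1}-u^*\|^2 .
\]
By the Cauchy--Schwarz inequality, the boundedness of $\{u^{k+1}\}$ and $\{p^k\}$ together with $\|u^{k+1}-u^k\|\to 0$ and $\|\lambda^{k+1}-\lambda^k\|\to 0$ force the left-hand side to tend to $0$. Since $\alpha>0$, this yields $\|u^{k+1}-u^*\|\to 0$, i.e., $\{u^k\}$ converges to $u^*$ strongly in $L^2(\mathcal{O})$. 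This is the crux of the argument: the regularization parameter $\alpha>0$ lets us bypass the usual weak-subsequence and demiclosedness reasoning and obtain strong convergence of the control directly.

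It then remains to propagate this limit to $y^k$ and $\lambda^k$ using the boundedness of $S$. Since $S(u^{k+1}-u^k)\to 0$ and $\lambda^{k+1}-\lambda^k\to 0$, the identity $y^{k+1}=Su^{k+1}-\frac{1}{s}(\lambda^{k+1}-\lambda^k)$ gives $y^{k+1}\to Su^*=y^*$ strongly in $L^2(Q)$. The first inequality in (\ref{VIkplus1}), taking $y=y^{k+1}\pm v$ for arbitrary $v\in L^2(Q)$, forces $p^k=y^{k+1}-y_d$, so $p^k\to y^*-y_d=\lambda^*$ strongly, where the last equality is the second relation of (\ref{opt}). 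Finally, since $p^k=\lambda^{k+1}-sS(u^{k+1}-u^k)$, we obtain $\lambda^{k+1}=p^k+sS(u^{k+1}-u^k)\to\lambda^*$ strongly in $L^2(Q)$, which completes the proof.

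Most of the heavy lifting has already been packaged into the earlier lemmas, so no serious obstacle remains; the one point requiring care is establishing boundedness of the auxiliary sequences $\{y^{k+1}\}$ and $\{p^k\}$ so that the cross terms in (\ref{s2}) genuinely vanish in the limit, and then chaining the four strong limits in the correct order ($u^k\to u^*$, then $y^k\to y^*$, then $p^k\to\lambda^*$, then $\lambda^k\to\lambda^*$). A minor caveat is that the analysis presupposes a solution point $(u^*,y^*,\lambda^*)^\top$ of (\ref{saddle_admm}) exists; existence is known for (\ref{optimal_control})--(\ref{admissible_set}), and the strong convexity of $g$ renders $u^*$ (hence $y^*=Su^*$ and $\lambda^*=y^*-y_d$) unique, so the limits identified above are unambiguous.
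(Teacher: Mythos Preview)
Your proof is correct and follows essentially the same approach as the paper: both derive $\|u^{k+1}-u^k\|\to 0$, $\|\lambda^{k+1}-\lambda^k\|\to 0$ and boundedness from (\ref{MainRes}), then exploit the strong-convexity term $\alpha\|u^{k+1}-u^*\|^2$ in (\ref{s2})/(\ref{seqU}) via Cauchy--Schwarz to force $u^k\to u^*$ strongly, and finally propagate to $y^k$ and $\lambda^k$. The only cosmetic differences are that the paper uses (\ref{seqU}) (involving $\lambda^{k+1}$, so boundedness of $p^k$ is not needed) rather than (\ref{s2}), and it obtains $\lambda^k\to\lambda^*$ from the explicit optimality condition of the $y$-subproblem rather than via $p^k=y^{k+1}-y_d$; these are equivalent reformulations of the same identities.
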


\begin{proof}
	Summarizing the inequality (\ref{MainRes}) from $k = 1$ to $k = \infty$, we have that
	\begin{equation*}
		{\small
	\begin{aligned}
	\delta \sum_{k= 1}^{\infty} (\|u^{k+1} - u^k \|^2 + \|\lambda^{k+1} - \lambda^k \|^2)  \leq  (\frac{1}{r} + \alpha )\|u^{1}- u^*\|^2  + \frac{1}{s}\|\lambda^{1}- \lambda^*\|^2 + \frac{1}{2} \|u^{1} - u^{0}\|^2_D + \frac{1}{8}  \sigma  \|S(u^{1} - u^{0})\|^2
	<  + \infty.
	\end{aligned}}
	\end{equation*}
	As a result, we have $\|u^{k+1} - u^k\|\rightarrow 0$ and $\|\lambda^{k+1} - \lambda^k\| \rightarrow 0$, and $\{u^k\}$ and $\{\lambda^k\}$ are bounded in $L^2(\mathcal{O})$ and $L^2(Q)$, respectively. Recall (\ref{seqU}). We have
		$$
		\begin{aligned}
		(S (u^{k+1}- u^k), \lambda^{k+1}-\lambda^k ) \geq& \frac{1}{r}( u^{k+1} - u^*, u^{k +1} - u^k) + \frac{1}{s}( \lambda^{k+1} - \lambda^*,\lambda^{k+1} - \lambda^k) + \alpha \| u^* -  u^{k+1} \|^2,
		\end{aligned}
		$$
	    which implies that
		\begin{equation*}
		\begin{aligned}
		\alpha \| u^* -  u^{k+1} \|^2 \leq  \|S\| \|u^{k+1}- u^k\|\| \lambda^{k+1}-\lambda^k \| &+ \frac{1}{r} \|u^{k+1} - u^*\|\| u^{k +1} - u^k \|+ \frac{1}{s}\| \lambda^{k+1} - \lambda^*\|\|\lambda^{k+1} - \lambda^k\|    .
	    \end{aligned}
		\end{equation*}
		Since the solution operator $S$ and the iterates $\lambda^k$ and $u^k$ are bounded, it follows from $\|u^{k+1} - u^k\|\rightarrow 0$ and $\|\lambda^{k+1} - \lambda^k\| \rightarrow 0$ that
		$u^{k} \rightarrow u^*$ strongly in $L^2(\mathcal{O}).$
		
		It follows from the continuity of the operator $S$ that $Su^{k} \rightarrow Su^*$ strongly in $L^2(\mathcal{O})$.
		Additionally, the fact $\|\lambda^{k+1} - \lambda^k\| \rightarrow 0$  implies that $\|Su^{k+1}-y^{k+1}\|\rightarrow 0$, and hence
		$y^k\rightarrow y^*$ strongly in  $L^2(Q).$
		
		Concerning with the convergence of $\lambda^k$, we note that $\lambda^*=y^*-y_d$ (see (\ref{opt})) and it follows from the optimality condition of the $y$-subproblem in (\ref{alg}) that
		$
		\lambda^k=-sSu^k+y^{k+1}-y_d+sy^{k+1}.
		$
		We thus have that
		$$
		\|\lambda^k-\lambda^*\|=\|-s(Su^k-y^k)+(y^{k+1}-y^*)\|\leq s\|Su^k-y^k\|+\|y^{k+1}-y^*\|.
		$$
		Since $\|Su^{k}-y^{k}\|\rightarrow 0$ and $\|y^{k+1}-y^*\|\rightarrow 0$, we conclude that
		$\lambda^k\rightarrow \lambda^*$ strongly in $L^2(Q)$.
\end{proof}

\section{Numerical results}\label{sec:numerical}

In this section, we solve a parabolic control constrained optimal control problem to validate the acceleration effectiveness of the primal-dual method (\ref{pd1})-(\ref{pd3}) with the enlarged step sizes (\ref{o_con_bound}). For the numerical discretization for all experiments, we employ the backward Euler finite difference scheme (with step size $\tau$) for the time discretization and the piecewise linear finite element method (with mesh size $h$) for the space discretization, respectively. Our codes were written in MATLAB R2020b and numerical experiments were conducted on a MacBook Pro with mac OS Monterey, Intel(R) Core(TM) i7-9570h (2.60 GHz), and 16 GB RAM.

We consider the following example:
\begin{equation}\label{ex1_Problem}
\begin{aligned}
\min_{u\in L^2(Q), y\in L^2(Q)} \quad  &\frac{1}{2}\|y-y_d\|_{L^2(Q)}^2+\frac{\alpha}{2}\|u\|_{L^2(Q)}^2+\theta(u),
\end{aligned}
\end{equation}
where $y$ and $u$ satisfy the following parabolic equation:
\begin{equation}\label{ex1_state}
\frac{\partial y}{\partial t}-\Delta y=f+u~ \text{in}~ \Omega\times(0,T), \quad
y=0~ \text{on}~ \Gamma\times(0,T),\quad y(0)=\varphi.
\end{equation}
Above, $\varphi\in L^2(\Omega)$, the function $f\in L^2(Q)$ is a source term that helps us construct the exact solution without affection to the numerical implementation. The nonsmooth term $\theta(u)$ is the indicator function of the admissible set (\ref{admissible_set}). We set $\Omega=(0,1)^2$, $\omega=\Omega$, $T=1$, $a=-0.5$, $b=0.5$ and
\begin{eqnarray*}
	&&y=(1-t)\sin\pi x_1\sin\pi x_2,~ q=\alpha (1-t)\sin 2\pi x_1\sin 2\pi x_2,~\varphi=\sin \pi x_1\sin \pi x_2, \\
	&&f=-u+\frac{dy}{dt}-\Delta y, ~ y_d=y+\frac{dq}{dt}+\Delta q,~ u=\min(-0.5,\max(0.5,-q/\alpha)).
\end{eqnarray*}
Then, it is easy to verify that $(u^*,y^*)^\top=(u, y)^\top$ is the optimal solution of  (\ref{ex1_Problem}). The problem (\ref{ex1_Problem}) has been discussed in, e.g. \cite{andrade2012multigrid,glowinski2022}.

For the purpose of numerical comparison, we also test the accelerated primal-dual (APD) method in \cite{chambolle2011first} and the inexact ADMM (InADMM) method in  \cite{glowinski2022}.
Numerical implementations of the InADMM follow all the settings in \cite{glowinski2022}, including the parameters settings, the solvers for the subproblems, and the stopping criteria. All algorithms to be tested are summarized below.
\begin{enumerate}
	\item [(1)] PD-C: The primal-dual method (\ref{pd1})--(\ref{pd3}) with the original convergence condition (\ref{convergence_condition});
	\item [(2)] PD-I: The primal-dual method (\ref{pd1})--(\ref{pd3}) with the enlarged step sizes (\ref{o_con_bound});
	\item [(3)] {APD($k$)}: The accelerated primal-dual method in \cite{chambolle2011first}, which adjusts the parameters every $k$ iterations;
	\item [(4)] {InADMM}: The inexact ADMM in \cite{glowinski2022} with CG inner iterations.
\end{enumerate}
The initial values for all primal-dual methods are set as $(u^0,p^0)^\top=(0,0)^\top$. For a prescribed tolerance $tol>0$, we terminate the iterations if
\begin{equation}\label{stopping}
	\max\Big\{\frac{\|u^{k+1}-u^k\|_{L^2(\mathcal{O})}}{\max\{1,\|u^k\|_{L^2(\mathcal{O})}\}},\frac{\|p^{k+1}-p^k\|_{L^2(Q)}}{\max\{1,\|p^k\|_{L^2(Q)}\}}\Big\}\leq tol.
\end{equation}

Recall that the upper bound of step sizes $1/\|S\|^2$ is enlarged by the factor $\frac{4+2\alpha r}{3}$. It is clear that the choice of $\alpha$ affects the value of $\frac{4+2\alpha r}{3}$ and thus has a further impact on the performance of PD-I. Intuitively, a relatively large $\alpha$ always leads to a large $\frac{4+2\alpha r}{3}$ and hence more likely improves the numerical efficiency. To validate this fact, we consider two different cases for problem (\ref{ex1_Problem}) in terms of the value of $\alpha$ in the following discussion.

\medskip
\noindent\textbf{Case I: $\alpha=10^{-3}$.} Concerning with the choices of $r$ and $s$ in all primal-dual methods, we note that, after the space-time discretization, one can estimate that $\|S\| = \|S^*\| \approx 0.05$ and this value is not affected
by the mesh sizes $\tau$ and $h$. According to (\ref{convergence_condition}), $r$ and $s$ should be chosen such that $r\cdot s<1/\|S^*S\|\approx 400$. Here, we choose $r=4\times 10^3$ and $s=1\times10^{-1}$ for PD-C. In addition, it follows from (\ref{o_con_bound}) that the upper bound of $r\cdot s$ can be enlarged by $\frac{4+2\alpha r}{3}=4$. We thus choose $r=4\times 10^3$ and $s=4\times 10^{-1}$ for PD-I. The parameters for all test algorithms are summarized in Table \ref{parameter_case1_ex1}.
\begin{table}[htpb]
	\centering
	\caption{Parameters for all primal-dual algorithms for Case I of Example 1}\label{parameter_case1_ex1}
	{\footnotesize
		\begin{tabular}{|c|c|c|c|c|c|c|}
			\hline
			Algorithm&Parameters\\
			\hline
			PD-C& $r=4\times 10^3, s=1\times 10^{-1}$\\
			\hline
			PD-I& $r= 4\times 10^3, s=4\times 10^{-1}$\\
			\hline
			{APD}(k)& $r_0=1\times 10^3, s_0=4\times 10^{-1};\forall k\geq0, \tau_k=\frac{1}{\sqrt{1+s_k}}, r_{k+1}=\frac{r_k}{\tau_k}; s_{k+1}=s_k\tau_k$\\
			\hline
		\end{tabular}
	}
\end{table}

The numerical results with $\tau=h={1}/{2^6}$ and $tol=10^{-5}$ are summarized in Table \ref{result_case1_ex1}.  We observe that PD-C is slower than InADMM, while PD-I is comparable to InADMM in terms of the total computational cost.
For APD, it is remarkable that implementing the adaptive step size selection strategy at every iteration is not efficient and it should be deliberately determined in practice, which is validated by the fact that APD(5) converges much faster than APD(1). We see that PD-I is more efficient than PD-C, APD(1), and APD(5). In particular, PD-I is 3 times faster than PD-C, which implies the superiority of the improved condition (\ref{o_con_bound}) to the original one (\ref{convergence_condition}).
\begin{table}[htpb]
	\centering
	\caption{Numerical comparisons for Case I of Example 1. ($\alpha=10^{-3 }, \tau=h={1}/{2^6}, tol=10^{-5}$)}\label{result_case1_ex1}
	{\footnotesize\begin{tabular}{|c|c|c|c|c|c|c|}
			\hline
			Algorithm&$Iter$&No. PDEs&$CPU$&$Obj$&$\|u^k-u^*\|$&$\|y^k-y^*\|$\\
			\hline
			PD-C&73&146 &14.3571 &$3.0742\times 10^{-4}$ &$2.3793\times 10^{-3}$ &$6.7691\times 10^{-5}$\\
			\hline
			PD-I& 24&48 &4.8687&$3.0742\times 10^{-4}$ &$2.3711\times 10^{-3}$ &$6.7512\times 10^{-5}$\\
			\hline
			{APD}(1)& 80&160&15.6948&$3.0741\times 10^{-4}$ &$2.6578\times 10^{-3}$ &$7.0752\times 10^{-5}$\\
			\hline
			{APD}(5)& 31&62 &5.6523&$3.0741\times 10^{-4}$ &$2.3913\times 10^{-3}$ &$6.7918\times 10^{-5}$\\
			\hline
		    {InADMM}& 19&50 &4.9675&$ 3.0741\times 10^{-4}$ &$2.3695\times 10^{-3}$ &$6.7459\times 10^{-5}$\\
			\hline
		\end{tabular}
	}
\end{table}

\begin{figure}[htpb]
	\caption{Numerical results at $t=0.25$ for Case I of Example 1. ($\alpha=10^{-3}, \tau=h={1}/{2^6}, tol=10^{-5}$)}\label{numerical_resultu_ex1}
	\centering
\subfigure[Computed control $u$]
	{\includegraphics[width=0.23\textwidth]{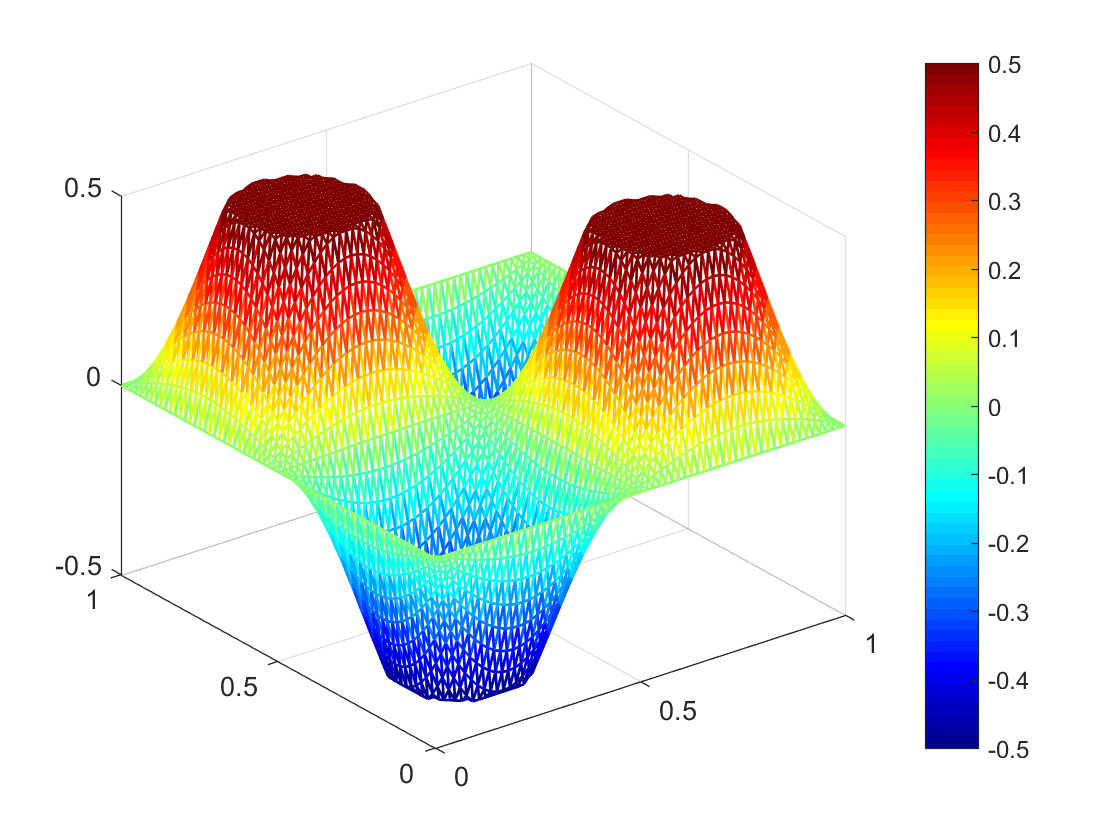}}
\subfigure[Computed state $y$]{\includegraphics[width=0.23\textwidth]{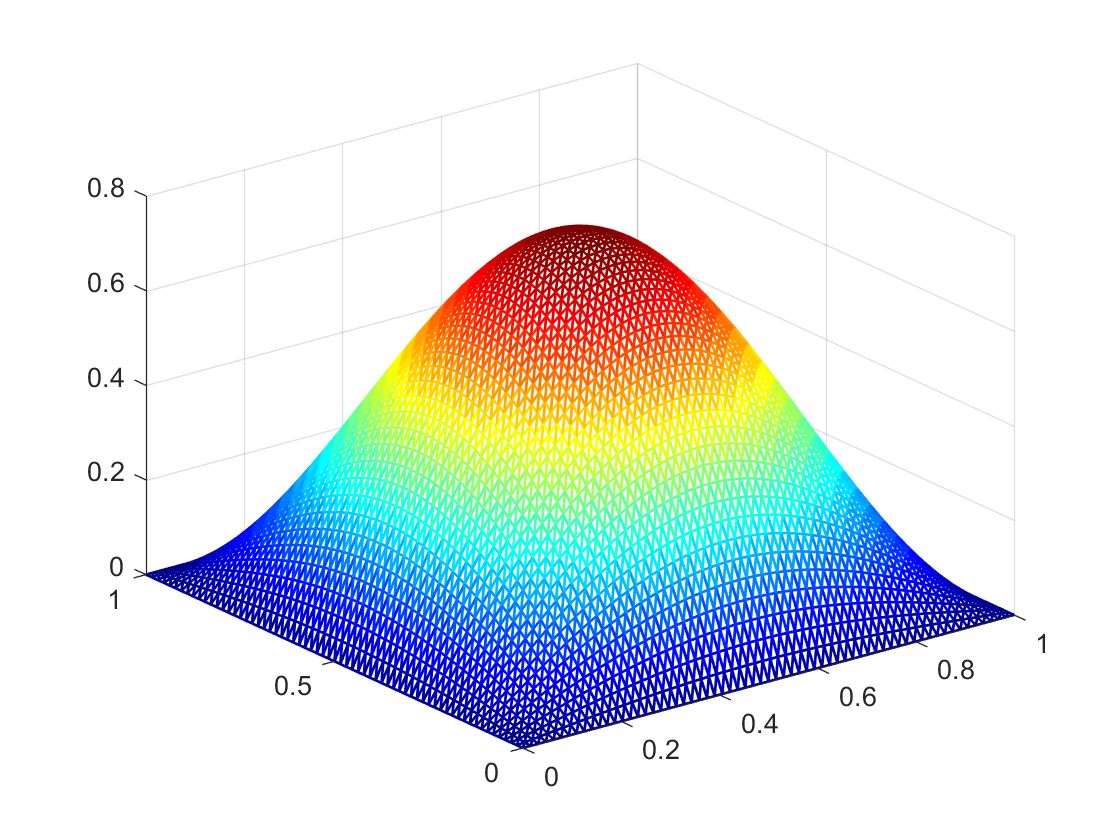}}
	\subfigure[Error $u-u^*$]	{\includegraphics[width=0.23\textwidth]{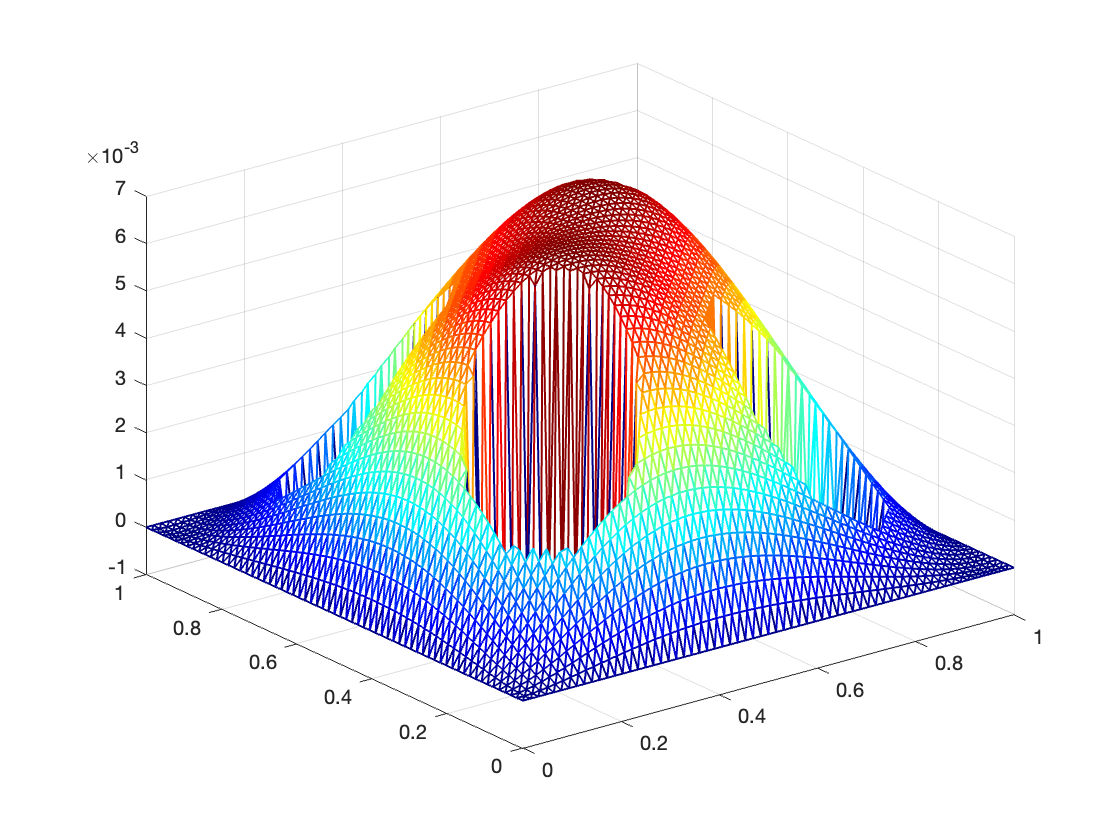}}
		\subfigure[Error $y-y^*$]{\includegraphics[width=0.23\textwidth]{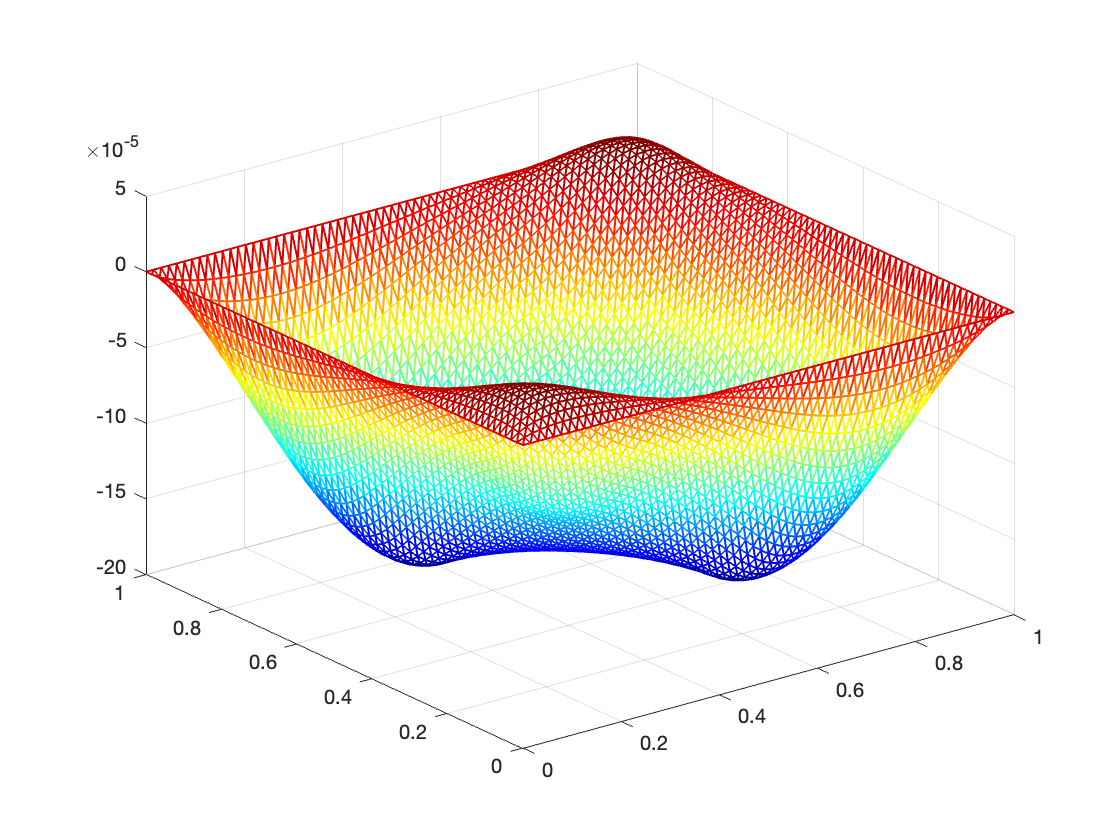}}
\end{figure}

\medskip

\noindent\textbf{Case II: $\alpha=10^{-5}$}. The parameters for all primal-dual methods are summarized in Table \ref{parameter_case2_ex1}. The numerical results with $\tau=h={1}/{2^6}$ and $tol=10^{-5}$ are presented in Table \ref{result_case2_ex1} and Figure \ref{numerical_results2_ex1}.
We observe from Table \ref{result_case2_ex1} that all primal-dual methods require less CPU time than that of InADMM. More specifically, although {InADMM} requires only 22 outer iterations, a total of 264 PDEs are required to be solved to promote the convergence. Compared with PD-C, PD-I can improve the numerical efficiency by $19.7\%$ and it is even faster than APD.  Here, we set $\alpha=10^{-5}$, which leads to the value of $\frac{4+2\alpha r}{3}$ relatively small, and hence compared with Case I, less numerical efficiency is improved by PD-I.
\begin{table}[h!]
	\centering
	\caption{Parameters for all primal-dual algorithms for Case II of Example 1.}\label{parameter_case2_ex1}
	{\footnotesize\begin{tabular}{|c|c|c|c|c|c|c|}
			\hline
			Algorithm&Parameters\\
			\hline
			PD-C& $r=4\times 10^3, s=1\times 10^{-1}$\\
			\hline
			PD-I& $r=5.6\times 10^3, s=1\times 10^{-1}$\\
			\hline
			{APD}(k)& $r_0=4\times 10^3, s_0=1\times 10^{-1};\forall k\geq0, \tau_k=\frac{1}{\sqrt{1+s_k}}, r_{k+1}=\frac{r_k}{\tau_k}; s_{k+1}=s_k\tau_k$\\
			\hline
		\end{tabular}
	}
\end{table}

\begin{table}[htpb]
	\centering
	\caption{Numerical comparisons for Case II of Example 1. ($\alpha=10^{-5}, \tau=h={1}/{2^6}, tol=10^{-5}$)}\label{result_case2_ex1}
	{\footnotesize
		\begin{tabular}{|c|c|c|c|c|c|c|}
			\hline
			Algorithm&$Iter$&No. PDEs&$CPU$&$Obj$&$\|u^k-u^*\|$&$\|y^k-y^*\|$\\
			\hline
			PD-C&122&244 &7.2705 &$3.5033\times 10^{-7}$ &$4.6747\times 10^{-3}$ &$8.6408\times 10^{-6}$\\
			\hline
			PD-I& 98&196 &5.8277&$3.5034\times 10^{-7}$ &$4.6715\times 10^{-3}$ &$8.6379\times 10^{-6}$\\
			\hline
			{APD}(1)& 118&236&7.0588&$3.5030\times 10^{-7}$ &$4.6731\times 10^{-3}$ &$8.5595\times 10^{-6}$\\
			\hline
			{APD}(5)& 106&212 &6.3541&$3.5036\times 10^{-7}$ &$4.6698\times 10^{-3}$ &$8.6628\times 10^{-6}$\\
			\hline
			{InADMM}& 22&264 &10.8675&$3.5035\times 10^{-7}$ &$4.6767\times 10^{-3}$ &$8.6088\times 10^{-6}$\\
			\hline
		\end{tabular}
	}
\end{table}

\begin{figure}[h!]
	\caption{Numerical results at $t=0.25$ for Case II of Example 1. ($\alpha=10^{-5}, \tau=h={1}/{2^6}, tol=10^{-5}$)}\label{numerical_results2_ex1}
	\centering
	\subfigure[Computed control $u$]
	{\includegraphics[width=0.23\textwidth]{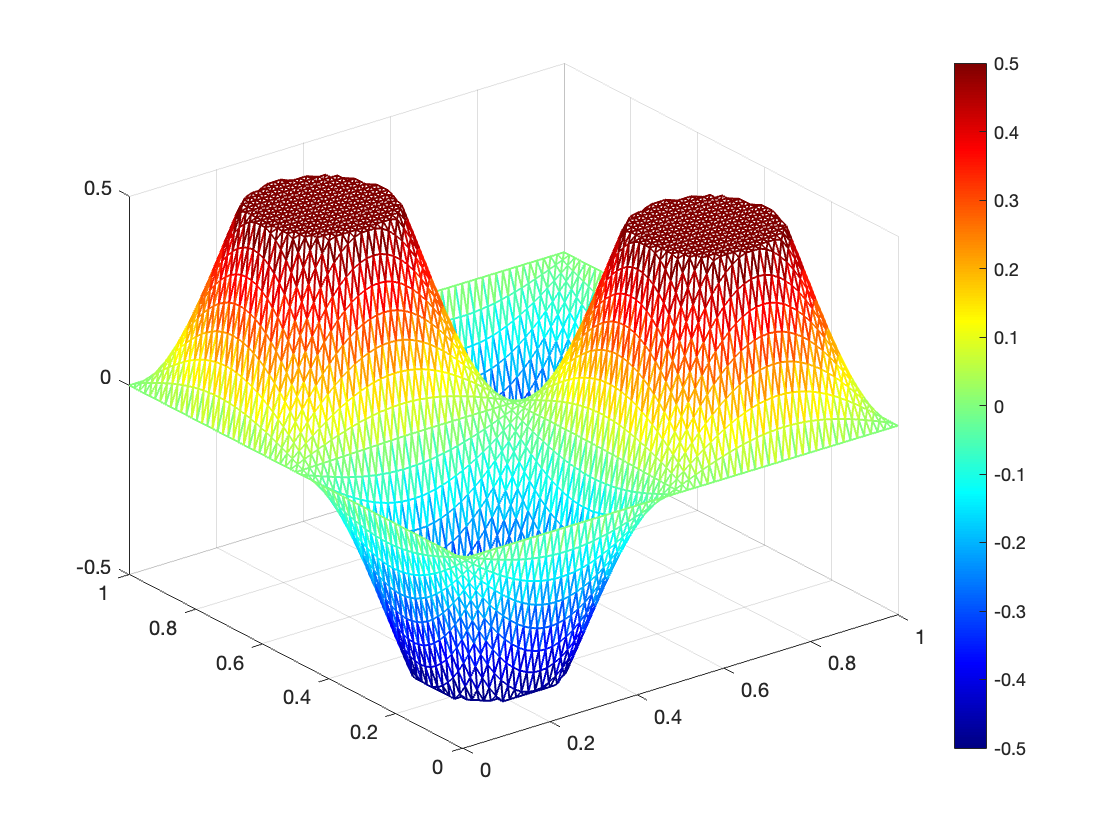}}
	\subfigure[Computed state $y$]{\includegraphics[width=0.23\textwidth]{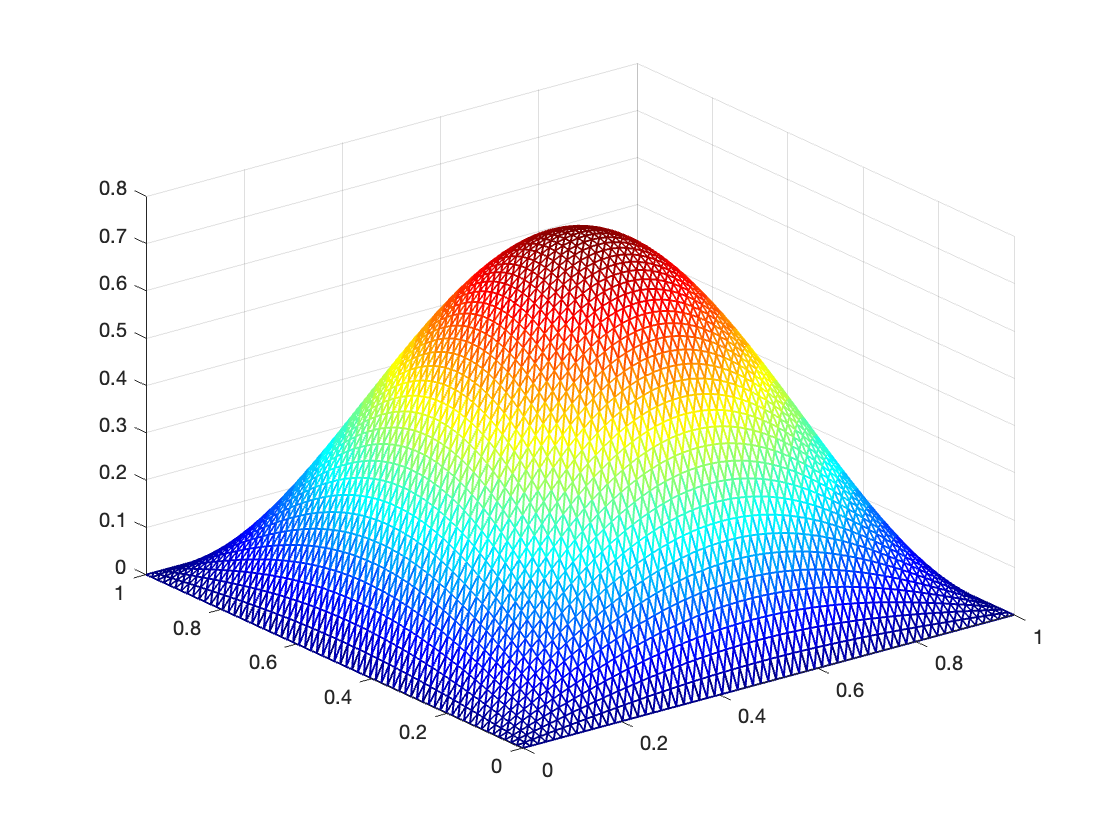}}
	\subfigure[Error $u-u^*$]	{\includegraphics[width=0.23\textwidth]{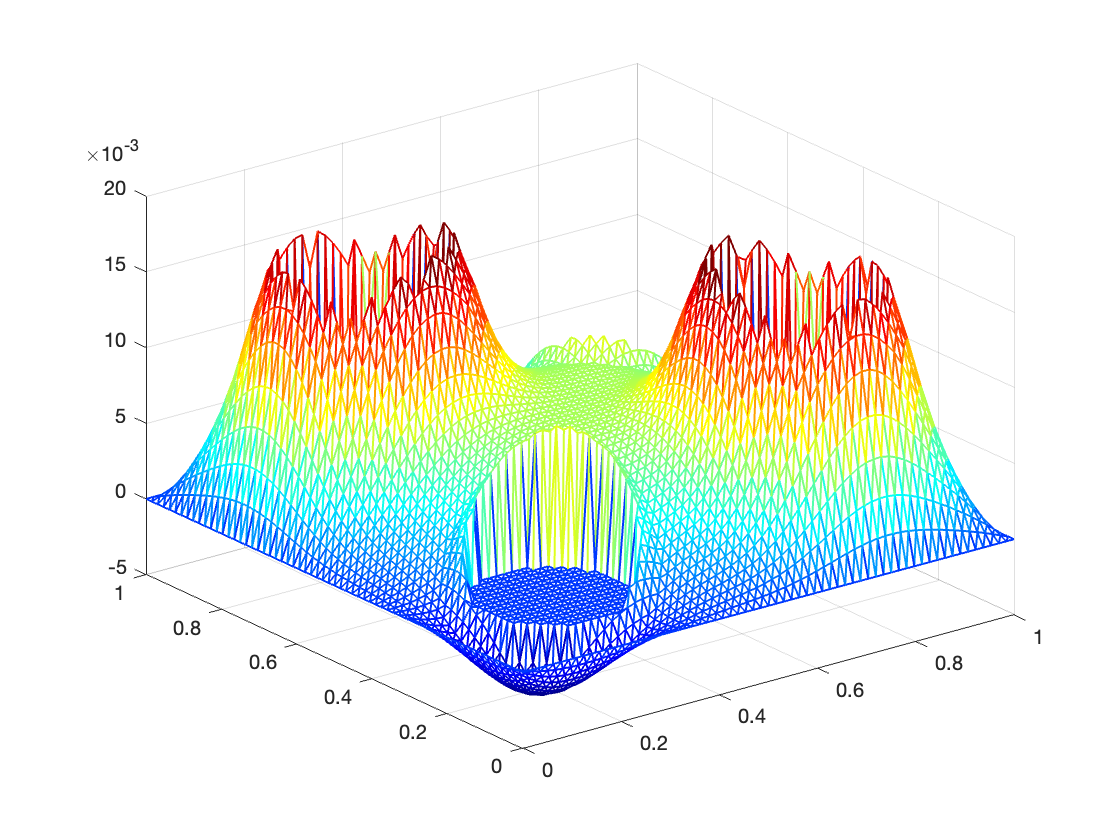}}
	\subfigure[Error $y-y^*$]{\includegraphics[width=0.23\textwidth]{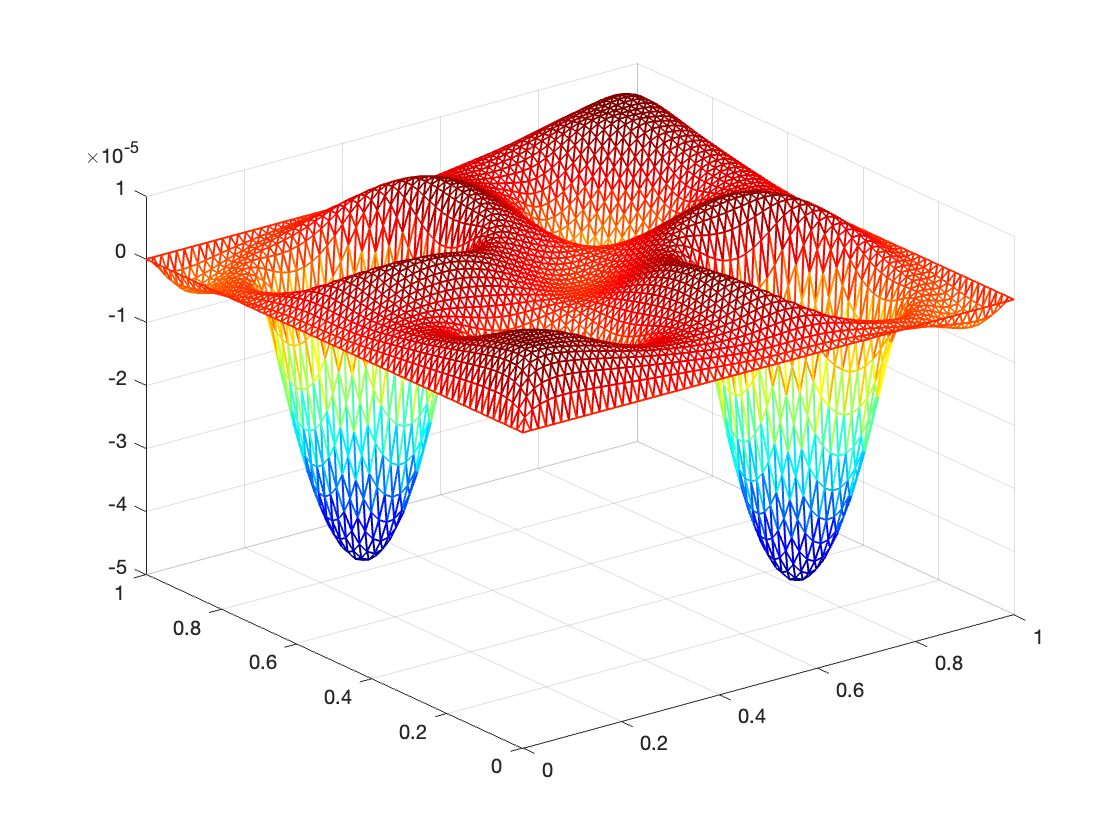}}
\end{figure}

Next, we recall that both PD-C and PD-I are described on the continuous level
and their convergence are analyzed in function spaces. Hence, mesh-independent property of these algorithms can be expected in practice, which means that the convergence behavior is independent of the fineness of the discretization. We test PD-C and PD-I with $\alpha=10^{-3}$ and  $\tau=h={1}/{2^i},i=4,\cdots,9$, and report the iteration numbers in Table \ref{meshindependent_case2_ex1}, from which mesh-independent properties of PD-C and PD-I can be observed.
\begin{table}[h!]
	\centering
	\caption{Iteration numbers w.r.t  different mesh sizes for Example 1.}\label{meshindependent_case2_ex1}
	{\footnotesize\begin{tabular}{|c|c|c|c|c|c|c|}
			\hline
			Mesh size&$1/{2^4}$&$1/{2^5}$&$1/{2^6}$&$1/{2^7}$&$1/{2^8}$&$1/{2^9}$\\
			\hline
			PD-C&73 &73& 73 & 73 & 73& 73\\
			\hline
			PD-I& 25& 25& 24& 24& 23 & 23\\
			\hline
		\end{tabular}
	}
\end{table}

Finally, in Table \ref{err_ex1}, we report the $L^2$-error for the iterate ($u$, $y$) obtained by PD-I for various values of $h$ and $\tau$. For succinctness, we only give the results for the case where $\alpha=10^{-5}$ and $tol= 10^{-5}$. It is clear from Table \ref{err_ex1} that, when  PD-I is applied to the problem (\ref{ex1_Problem}), the iterative accuracy is sufficient and the overall error of $u$ and $y$ are both dominated by the discretization error.

\begin{table}[h!]
	\setlength{\abovecaptionskip}{0pt}
	\setlength{\belowcaptionskip}{3pt}
	\centering
	\caption{Numerical errors of PD-I for Example 1. ($\alpha=10^{-3},tol=10^{-5}$)}\label{err_ex1}
	{\small\begin{tabular}{|c|c|c|c|c|c|}
			\hline
			error&$h=\tau=2^{-5}$&$h=\tau=2^{-6}$&$h=\tau=2^{-7}$&$h=\tau=2^{-8}$\\
			\hline
			$\|u-u^*\|_{L^2(Q)}$&$1.8404\times 10^{-2}$ & $4.6715\times 10^{-3}$&$1.1815\times 10^{-3}$ &$3.2924\times 10^{-4}$\\
			\hline
			$\|y-y^*\|_{L^2(Q)}$& $ 3.6458\times 10^{-5}$ & $  8.6370\times 10^{-6}$& $ 2.1690\times 10^{-6}$&$5..8059\times 10^{-7}$\\
			\hline
		\end{tabular}
	}
\end{table}

\section{Extension: A sparse elliptic optimal control problem}\label{se:se}
In previous sections, we focus on the parabolic optimal control problem (\ref{optimal_control})--(\ref{admissible_set}) to expose our main ideas more clearly. As mentioned in the introduction, various optimal control problems can be covered by the model (\ref{Basic_Problem}) and all previous discussions can be easily extended to them. In this section, we showcase by a sparse elliptic optimal control problem to delineate how to extend the primal-dual method (\ref{pd1})-(\ref{pd3}) with the enlarged step sizes (\ref{o_con_bound}) to other optimal control problems. Some notations and discussions analogous to previous ones are not repeated for succinctness.

Let us consider the following sparse elliptic optimal control problem:
\begin{equation}\label{modelproblem}
\underset{u\in L^2(\mathcal{O}),y\in H_0^1(\Omega)}{\min}~ J(y,u)=\frac{1}{2}\|y-y_d\|_{L^2(\Omega)}^2+\frac{\alpha}{2}\|u\|_{L^2(\Omega)}^2+{\mu}\|u\|_{L^1(\Omega)}+I_{U_{ad}}(u),
\end{equation}
where $y$ and $u$ satisfy the following state equation:
\begin{equation}\label{state_equation}
	-\Delta y=u ~ \text{in}~ \Omega, \quad
	y=0~ \text{on}~ \Gamma.
\end{equation}
In (\ref{modelproblem})-(\ref{state_equation}), {$\Omega\subset \mathbb{R}^d(d\ge 1)$ is a convex polyhedral domain} with boundary $\Gamma:=\partial\Omega$, $y_d\in L^2(\Omega)$ is a given target, and the constants $\alpha>0$ and $\mu>0$ are regularization parameters.
We denote by $I_{U_{ad}}(\cdot)$ the indicator function of the admissible set
$
	U_{ad}:=\{u\in L^\infty(\Omega)| a\leq u(x)\leq b, ~\text{a.e.~in}~ \Omega \}\subset L^2(\Omega),
$
where $a,b \in L^2(\Omega)$ with $a < 0 < b$ almost everywhere. Due to the presence of the nonsmooth $L^1$-regularization term,  the optimal control of (\ref{modelproblem}) has small support  \cite{stadler2009elliptic,wachsmuth2011}. Because of this special structural property, such problems capture important applications in various fields such as optimal actuator placement \cite{stadler2009elliptic} and impulse control \cite{ciaramella2016}.

\subsection{Primal-dual method for (\ref{modelproblem})-(\ref{state_equation})}

Similar to what we have done for (\ref{optimal_control})--(\ref{admissible_set}), implementing the primal-dual method in \cite{chambolle2011first} to (\ref{modelproblem})-(\ref{state_equation}) yields the following iterative scheme:
\begin{equation}\label{ue_k+1}
~u^{k+1}=P_{U_{ad}}\left(\mathbb{S}_{\frac{\mu r}{\alpha r+1}}\left(\frac{u^k-rS^*{p}^k}{\alpha r+1}\right)\right),\quad
p^{k+1}=\left(S(2u^{k+1}-u^k)+\frac{1}{s}p^k-y_d\right)/(1+\frac{1}{s}),
\end{equation}
where $S:L^2(\Omega)\rightarrow L^2(\Omega)$ such that $y=Su$ is the solution operator associated with the elliptic state equation (\ref{state_equation}), $S^*: L^2(\Omega)\rightarrow L^2(\Omega)$ is the adjoint operator of $S$, $P_{U_{ad}}(\cdot)$ denotes the projection onto the admissible set $U_{ad}$, namely, $P_{U_{ad}}(v)(x) := \max\{a, \min\{v(x), b\}\}$ a.e. in $\Omega$, $\forall v\in L^2(\Omega)$, and $\mathbb{S}$ is the Shrinkage operator defined by
$$
\mathbb{S}_\zeta(v)(x) = \text{sgn}(v(x)) (|v(x)|-\zeta)_{+}~\text{a.e. in }~\Omega,
$$
where $\zeta$ a positive constant, ``$\text{sgn}$" is the sign function, and $(\cdot)_{+}$ denotes the positive part. Under the condition (\ref{convergence_condition}) or (\ref{o_con_bound}), both the PD-C and PD-I can be proposed for problem (\ref{modelproblem})-(\ref{state_equation}), and convergence analysis can simply follow the results in Section \ref{sec:convergence_primal_dual} directly. At each iteration of (\ref{ue_k+1}), the main computation consists of solving the state equation (\ref{state_equation}) to compute $S(2u^{k+1}-u^k)$, and the adjoint equation
$
-\Delta q^k=p^k~\text{in}~\Omega,~q=0~\text{on}~ \Gamma,
$
to compute $q^k=S^*p^k$.


\subsection{Numerical results}

In this subsection, we report some numerical results to validate the efficiency of the primal-dual method (\ref{ue_k+1}) for solving (\ref{modelproblem})-(\ref{state_equation}).

\medskip
\noindent{\textbf{Example 2.}} We consider the example given in \cite{stadler2009elliptic}. To be concrete, we set $\Omega=(0,1)\times(0,1)$,  $a=-30, b = 30$, and $y_d =
\frac{1}{6}e^{2x_1}\sin(2\pi x_1)\sin(2\pi x_2)$ in (\ref{modelproblem})-(\ref{state_equation}). In all numerical experiments, the numerical discretization is implemented by the finite element method described in \cite{wachsmuth2011}. We test the PD-C and PD-I for two cases in terms of the choice of $\alpha$. The initial value is set as $(u^0,p^0)^\top=(0,0)^\top$, and all algorithms are terminated if (\ref{stopping}) holds with $tol=10^{-5}$.

First, we set $\mu=5\times10^{-3}$ and $\alpha= 1\times10^{-3}$. The parameters are selected as those listed in Table \ref{parameter_case1_ex1}. We summarize the numerical results in Table \ref{tab:meshindependent_case1_ex3}. It is clear that all algorithms are robust to the mesh size, and mesh-independent convergence can be observed. The PD-I improves the numerical efficiency significantly. The numerical results $u$ and $y$ obtained by PD-I with $h={1}/{2^6}$ are reported in Figure \ref{numerical_resultu_case1_ex3}. As expected, we note that  $u = 0$ on a relatively
large part of $\Omega$ due to the presence of the regularization term $\mu\|u\|_{L^1(\Omega)}$.
\begin{table}[h!]
	\centering
	\caption{Iteration numbers w.r.t different mesh sizes for Example 2 when $\alpha=10^{-3}, \mu=5\times10^{-3}$.}\label{tab:meshindependent_case1_ex3}
	{\footnotesize\begin{tabular}{|c|c|c|c|c|c|c|}
			\hline
			Mesh size&$1/{2^4}$&$1/{2^5}$&$1/{2^6}$&$1/{2^7}$&$1/{2^8}$&$1/{2^9}$\\
			\hline
			PD-C&98 &97& 97& 97& 97& 97\\
			\hline
			PD-I&33& 33&33 &33 & 33& 33\\
			\hline
		\end{tabular}
	}
\end{table}

\begin{figure}[h!]
	\caption{ Numerical results for Example 2 when $\mu=5\times10^{-3}$.}\label{numerical_resultu_case1_ex3}
	\centering
	\subfigure[Control $u$ ($\alpha=10^{-3}$)]
	{\includegraphics[width=0.232\textwidth]{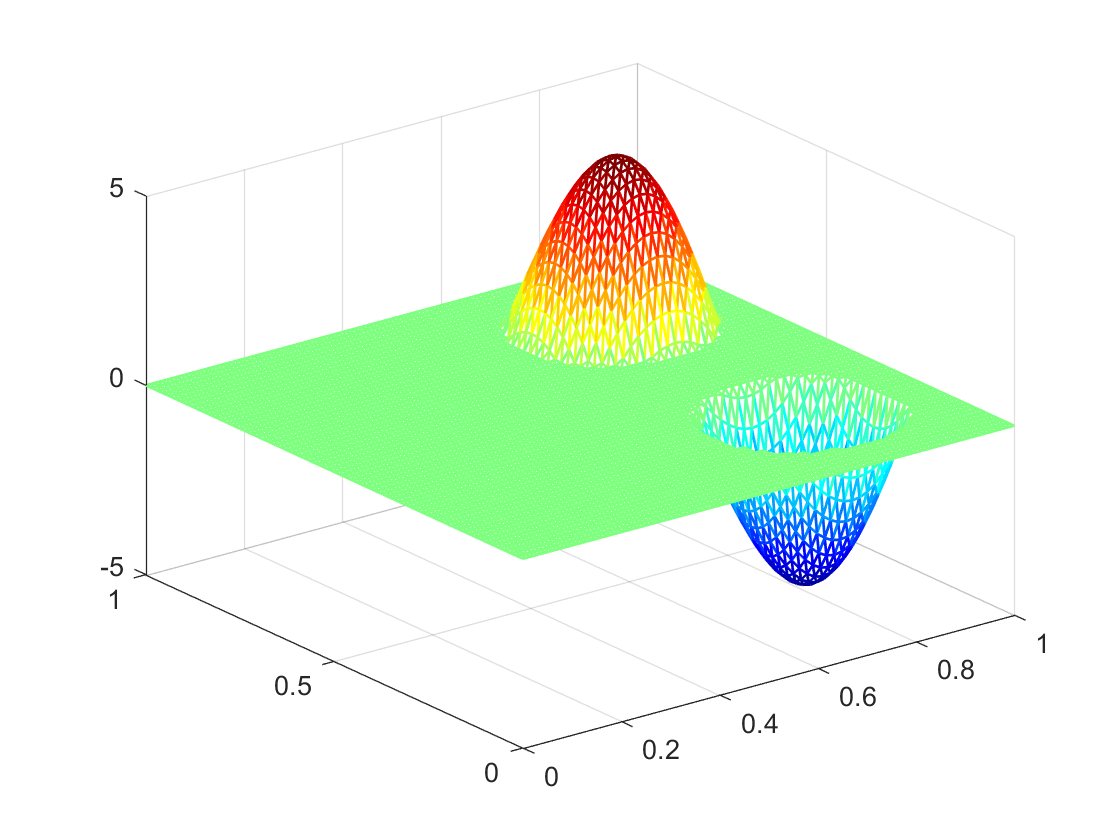}}
	\subfigure[State $y$ ($\alpha=10^{-3}$)]{\includegraphics[width=0.232\textwidth]{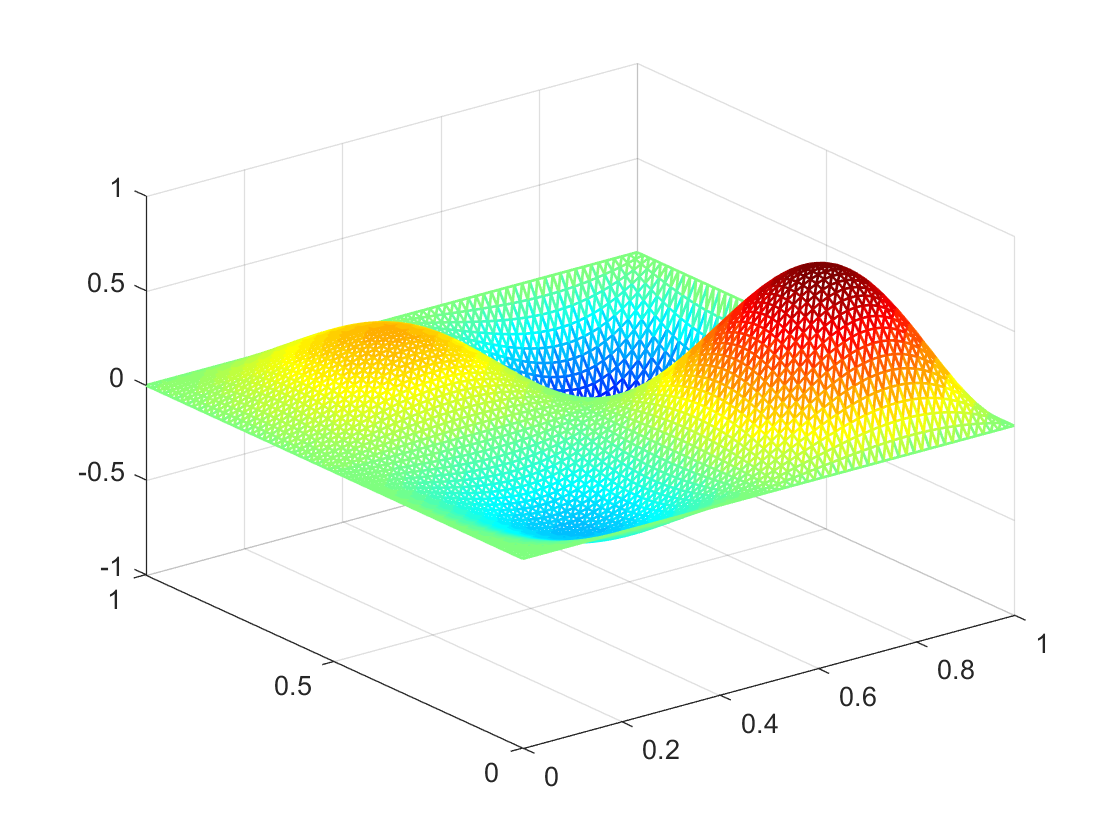}}
		\subfigure[Control $u$ ($\alpha=10^{-5}$)]	{\includegraphics[width=0.232\textwidth]{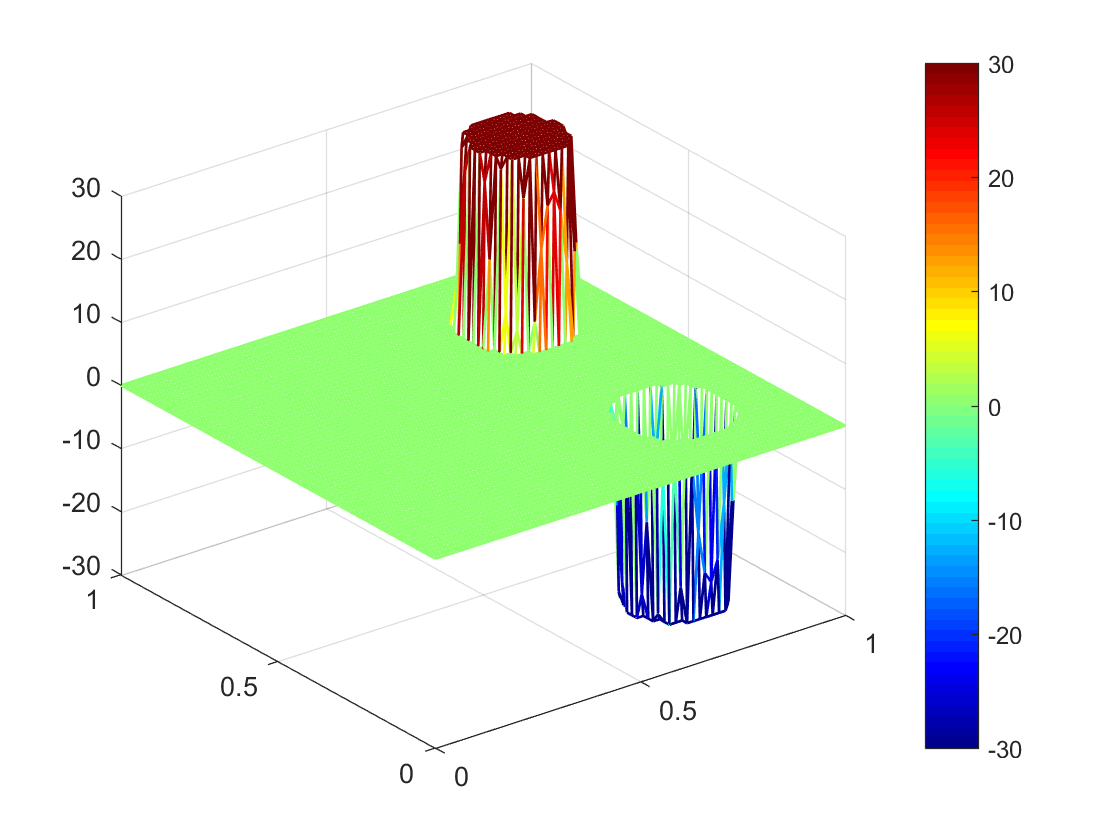}}
\subfigure[State $y$ ($\alpha=10^{-5}$)]{\includegraphics[width=0.232\textwidth]{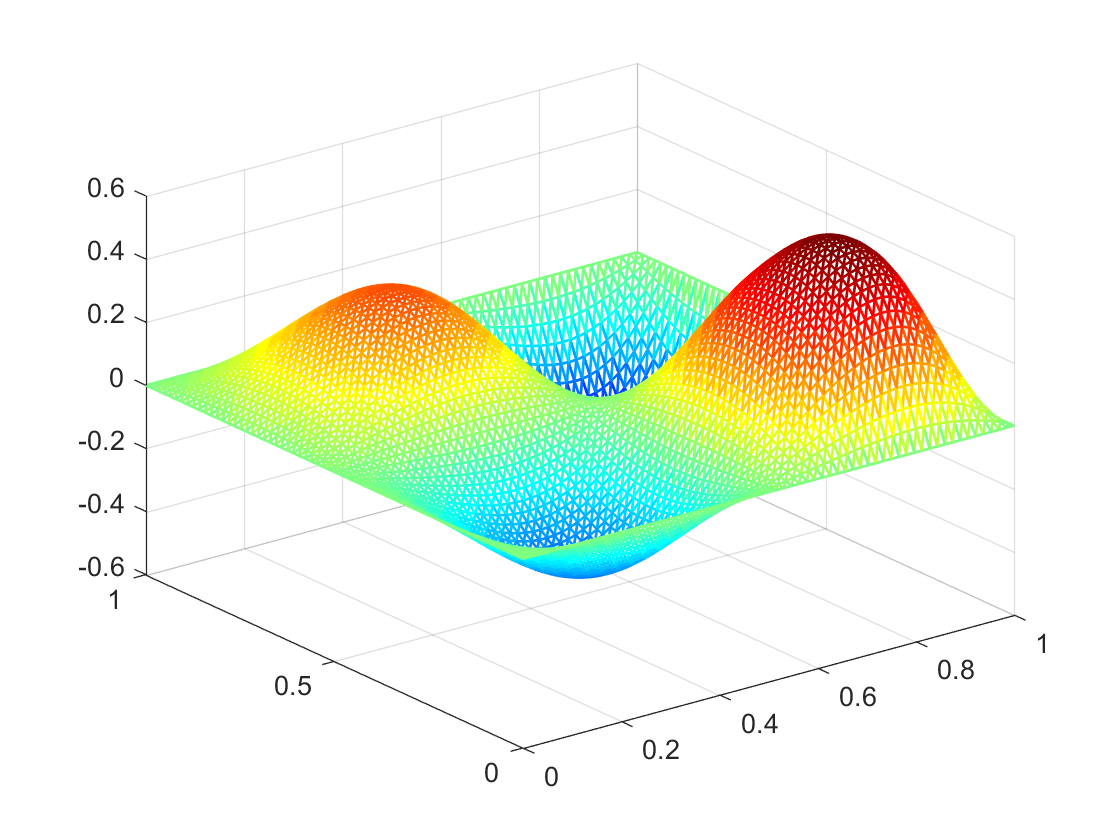}}
\end{figure}

Second, we still set $\mu=5\times10^{-3}$ but $\alpha= 1\times10^{-5}$. For this case, the parameters are selected as those listed in Table \ref{parameter_case2_ex1}. The numerical results of all test algorithms with respect to different mesh sizes are presented in Table \ref{tab:meshindependent2_ex3}. We observe from these results that the performance of all algorithms are robust to the mesh sizes, while PD-I improves the numerical efficiency of PD-C sharply. The numerical results $u$ and $y$ obtained by PD-I with $h={1}/{2^6}$ are reported in Figure \ref{numerical_resultu_case1_ex3}.

\begin{table}[h!]
	\centering
	\caption{Iteration numbers w.r.t  different mesh sizes for Example 2 when $\alpha=10^{-5}, \mu=5\times10^{-3}$.}\label{tab:meshindependent2_ex3}
	{\footnotesize\begin{tabular}{|c|c|c|c|c|c|c|}
			\hline
			Mesh size&$1/{2^4}$&$1/{2^5}$&$1/{2^6}$&$1/{2^7}$&$1/{2^8}$&$1/{2^9}$\\
			\hline
			PD-C&114 &147& 161& 165& 169& 171\\
			\hline
			PD-I&82& 106&116 &119 & 121& 123\\
			\hline
		\end{tabular}
	}
\end{table}

Next, we study the effectiveness of $\mu$ on the performance of PD-C and PD-I. For this purpose, we implement both of them to Example 3 with different $\mu$ and $\alpha=10^{-3}$.
The results  are reported in Table \ref{tab:beta_ex3} and the computed optimal controls are depicted in Figure \ref{numerical_resultu_beta}. These results indicate that all algorithms are robust to the values of $\mu$. Moreover, it was shown in \cite{stadler2009elliptic} that, as $\mu$ increases, the size of the nonzero region of $u$ decreases, and when $\mu$ is sufficiently large, $u$ is zero on the whole $\Omega$.   From Figure \ref{numerical_resultu_beta}, it is easy to see that the nonzero part of $u$ decreases as $\mu$ increases, which coincides with the results in \cite{stadler2009elliptic}.

\begin{table}[h!]
	\centering
	\caption{\small Numerical comparisons w.r.t different $\mu$ for Example 2 when $\alpha=10^{-3}.$}
	\label{tab:beta_ex3}
	{\footnotesize \begin{tabular}{|c|c|c|c|c|c|c|}
			\hline
			\multirow{2}{*}{$\mu$}&
			\multicolumn{2}{c|}{PD-C}&\multicolumn{2}{c|}{ PD-I}\cr\cline{2-5}
			&\footnotesize{Iter}&\footnotesize{$\|y-y_d\|_{L^2(\Omega)}$}&\footnotesize{Iter}&\footnotesize{$\|y-y_d\|_{L^2(\Omega)}$}\cr
			\hline
			$0$ &87&$2.4963\times 10^{-1}$&30&$2.4963\times 10^{-1}$\cr\hline
			
			
			$5\times10^{-4}$&88&$2.5356\times 10^{-1}$&31&$2.5356\times 10^{-1}$\cr\hline
			
			$3\times10^{-3}$&93&$2.7034\times 10^{-1}$&32&$2.7034\times 10^{-1}$ \cr\hline
			
	
		    $2\times10^{-2}$&97&$2.9018\times 10^{-1}$&32&$2.9018\times 10^{-1}$ \cr\hline
		\end{tabular}
	}
\end{table}
\begin{figure}[h!]
	\caption{ Numerical controls $u$ (from above view) w.r.t different $\mu$ for Example  2 when $\alpha=10^{-3}$. (noz=$\frac{|u\neq 0|}{|\Omega|}$)}\label{numerical_resultu_beta}
	\centering
	\subfigure[$\mu=0, noz=1$]{
	\includegraphics[width=0.23\textwidth]{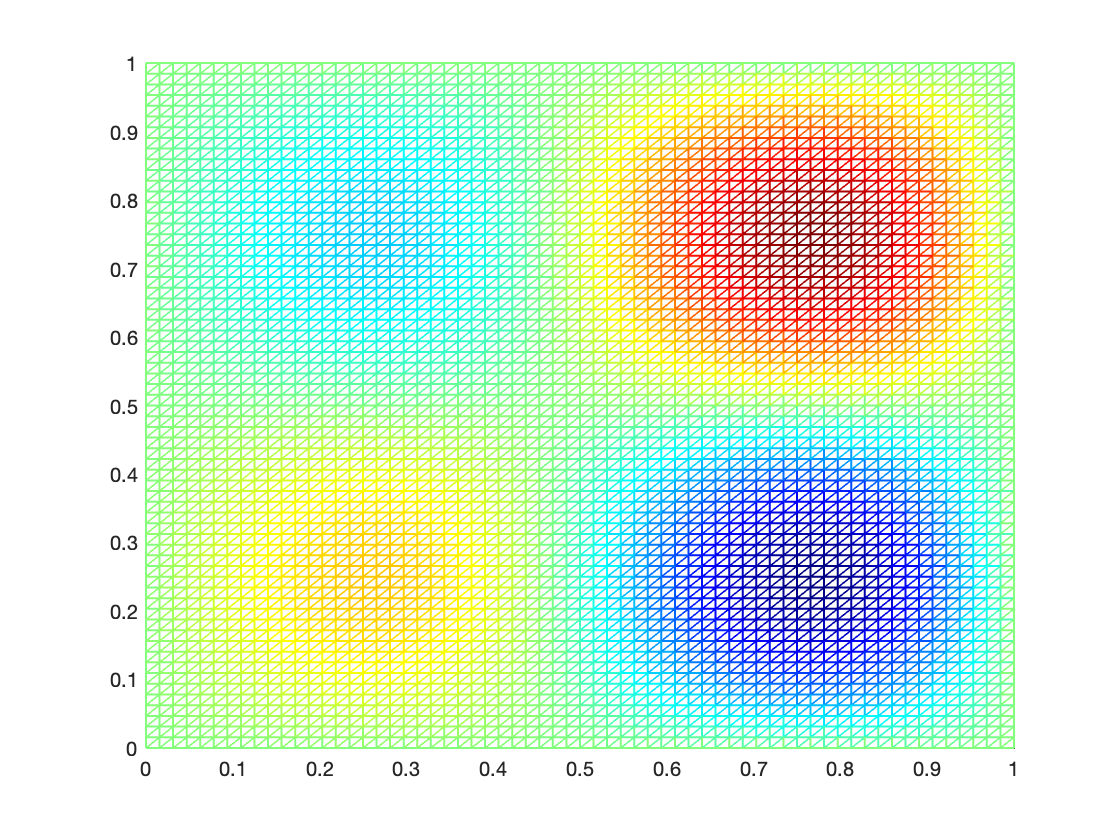}}
	\subfigure[$\mu=5\times10^{-4},noz=0.83$]{
	\includegraphics[width=0.23\textwidth]{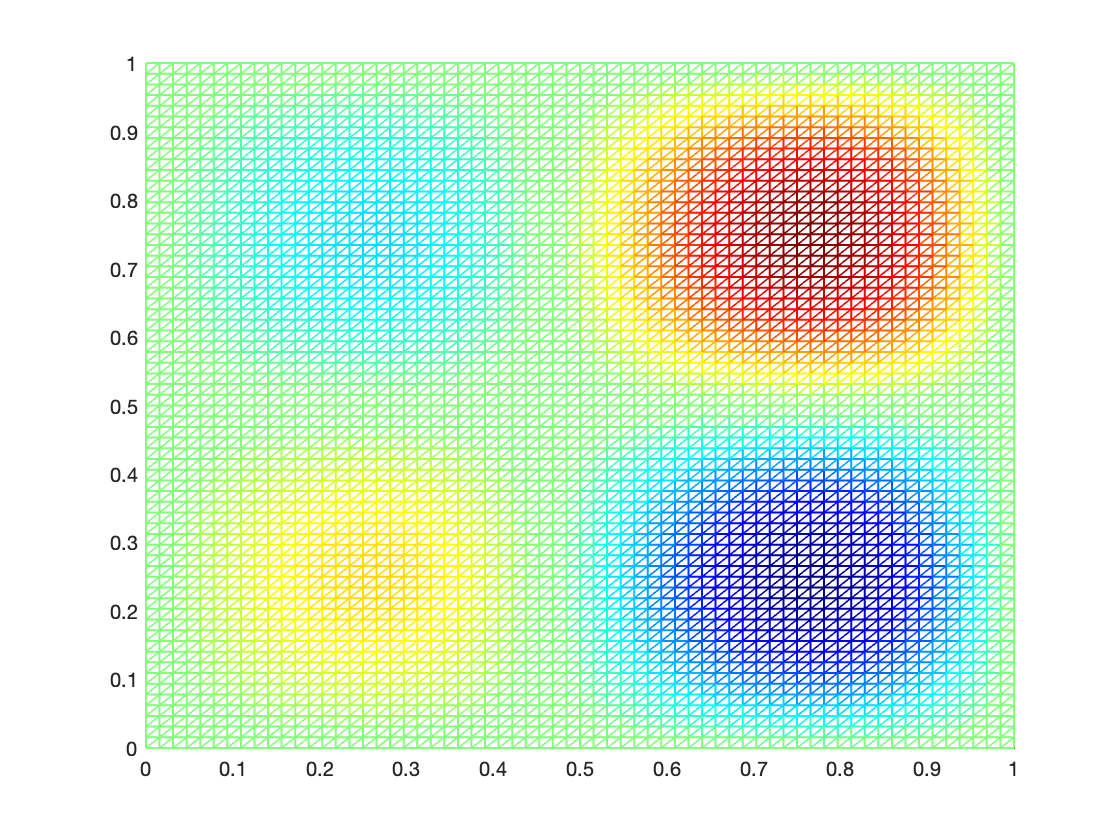}}
	\subfigure[$\mu=3\times10^{-3},noz=0.32$]{
	\includegraphics[width=0.23\textwidth]{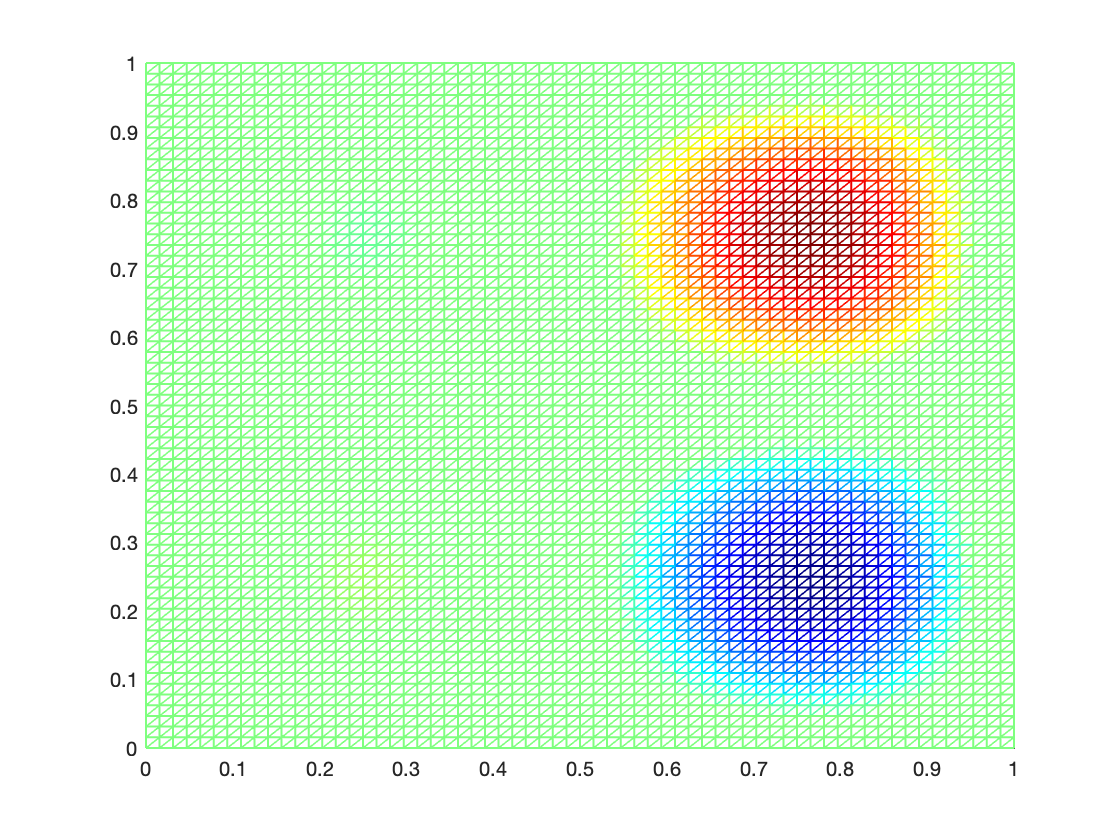}}
	\subfigure[$\mu=2\times10^{-2},noz=0$]{
	\includegraphics[width=0.23\textwidth]{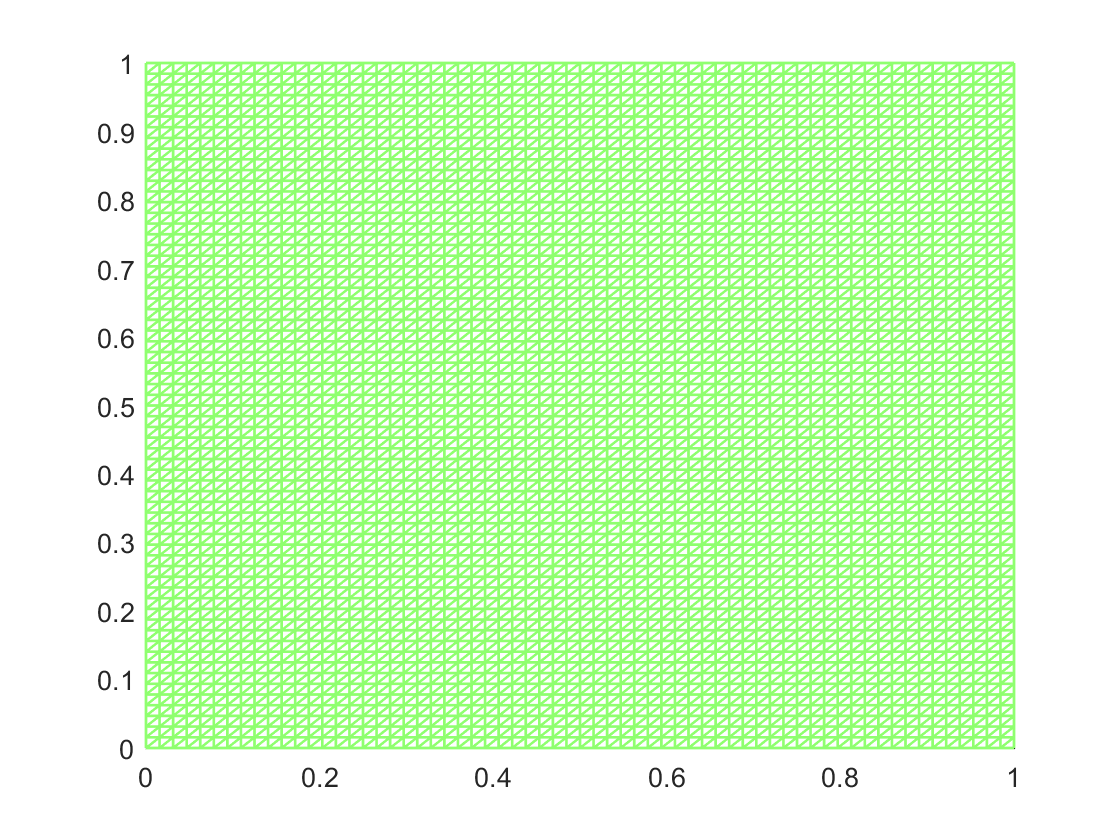}}
\end{figure}

\section{Implementation of the primal-dual method with operator learning (\ref{pdol})}\label{se: pdol}
In this section, we shall delineate the implementation of the primal-dual method with operator learning (\ref{pdol}) and specify some particular algorithms. To this end, we assume that $\mathcal{O}=Q$ in (\ref{optimal_control})-(\ref{admissible_set}) to simplify the notation. Then,  the central concern is constructing two surrogates $y=\mathcal{S}_{\theta_s}(u)$ and $q=\mathcal{S}_{\theta_a}(p)$ for the state equation $y=Su$ and its adjoint $q=S^*q$, respectively, by some operator learning methods.

We first elaborate on the main ideas of operator learning.
For this purpose, let $G$ be the solution operator of a generic PDE defined on the domain $\mathcal{D}$ which takes an input function $u$, and $y=G(u)$ be the solution of the PDE. Operator learning aims at approximating  $G$ with a neural network  $\mathcal{G}_{\theta}$. For any point $z\in\mathcal{D}$, $G(u)(z)$ is a real number (the value of $y$ at $z$). The neural network $\mathcal{G}_{\theta}$ takes inputs consisting of $u$ and $z$, and outputs the value $\mathcal{G}_{\theta}(u)(z)$. Suppose that we have a data set $\{G(u_i)(z_j)\}_{1\leq i\leq N_1, 1\leq j\leq N_2}$ of different input functions $\{u_i\}$ and points $\{z_j\}$. Then, the neural network is trained by solving
$$
\theta^*=\min_{\theta}\|\mathcal{G}_{\theta}(u)(z)-G(u)(z)\|_{L^2(\mathcal{D})}^2.
$$
Operator learning provides a surrogate model $y=\mathcal{G}_{\theta^*}(u)$ for $y=G(u)$. Several operator learning methods, such as the DeepONets \cite{lu2021learning}, the physics-informed DeepONets \cite{wang2021}, the FNO \cite{li2020FNO}, the GNO \cite{li2020GNO}, and the LNO \cite{cao2023}, have been recently proposed in the PDE literature. In the following discussion, to expose our main ideas clearly, we focus on the DeepONets \cite{wang2021} to elaborate on the implementation of (\ref{pdol}). Other operator learning methods can be applied in a similar way.

\begin{figure}[htpb]
	\caption{Workflow of the DeepONets. Source: the figure is adapted from \cite{lu2021learning}.}\label{fig:deepOnet}
	\begin{tikzpicture}[global scale =0.6]
			\hspace{1.6cm}
	\centering		
	\node at(0,11.5)     {\textbf{a}};
	\node at(2,11.2)     {\textbf{Stacked DeepONet}};
	\node at(0,9)     (7) {$u$};
	\node at(1.5,9)     (8) [zbox]{$\begin{aligned}
			u(x_1)\\u(x_2)\\ \cdots\\u(x_m)
		\end{aligned}$};
	\node at(4.5,10)    (9)[zbox,fill=green!20]{Branch net$_1$};
	\node at(4.5,9)    (10)[zbox,fill=green!20]{Branch net$_2$};
	\node at(4.5,8)        {$\cdots$};
	\node at(4.5,7.3)  (11)[zbox,fill=green!20]{Branch net$_n$};
	\node at(4.5,4.5)  (12)[zbox,fill=green!20]{Trunk net};
	\node at(7,9)(13)[rectangle, minimum width =25pt, minimum height =110pt, inner sep=5pt,draw=black]  {} ;
	\node at(7,4.5)(14)[rectangle, minimum width =25pt, minimum height =110pt, inner sep=5pt,draw=black]  {} ;
	\node at(7.05,10.5) (15)[circle, minimum width =17pt, minimum height =17pt, inner sep=0.1pt, fill=blue!20,draw=black]{$b_1$};
	\node at(7.05,9.5) (16)[circle, minimum width =17pt, minimum height =17pt, inner sep=0.1pt, fill=blue!20,draw=black]{$b_2$};
	\node at(7.05,8.5)          {$\cdots$};
	\node at(7.05,7.5) (17)[circle, minimum width =17pt, minimum height =17pt, inner sep=0.1pt, fill=blue!20,draw=black]{$b_n$};
	\node at(7.05,6)   (18)[circle, minimum width =17pt, minimum height =17pt, inner sep=0.1pt, fill=blue!20,draw=black]{$t_1$};
	\node at(7.05,5)   (19)[circle, minimum width =17pt, minimum height =17pt, inner sep=0.1pt, fill=blue!20,draw=black]{$t_2$};
	\node at(7.05,4)          {$\cdots$};
	\node at(7.05,3)   (20)[circle, minimum width =17pt, minimum height =17pt, inner sep=0.1pt, fill=blue!20,draw=black]{$t_n$};
	\node at(8.3,7)    (21)[circle, minimum width =17pt, minimum height =17pt, inner sep=0.1pt, fill=blue!20,draw=black]{$\times$};
	\node at(9.5,7)   (22){$\mathcal{G}_{\theta}(u)(z)$};
	\node at(2,4.5)   (23){$z$};
	
	\draw[->] (7) --(8);
	\draw[->] (8) --(9);
	\draw[->] (8) --(10);
	\draw[->] (8) --(11);
	\draw[->] (23) --(12);
	\draw[->] (9) --(15);
	\draw[->] (10) --(16);
	\draw[->] (11) --(17);
	\draw[->] (12) --(18);
	\draw[->] (12) --(19);
	\draw[->] (12) --(20);
	\draw[->] (13) --(21);
	\draw[->] (14) --(21);
	\draw[->] (21) --(8.85,7);

	\hspace{0.2cm}

	\node at(10.5,11.5)     {\textbf{b}};
	\node at(12.5,11.2)     {\textbf{Unstacked DeepONet}};
	\node at(10.5,9)    (24) {$u$};
	\node at(12,9)      (25) [zbox]{$\begin{aligned}
			u(x_1)\\u(x_2)\\ \cdots\\u(x_m)
		\end{aligned}$};
	\node at(15,9)      (26)[zbox,fill=green!20]{Branch net};
	\node at(15,4.5)    (27)[zbox,fill=green!20]{Trunk net};
	\node at(17.5,9)    (28)[rectangle, minimum width =25pt, minimum height =110pt, inner sep=5pt,draw=black]  {} ;
	\node at(17.5,4.5)  (29)[rectangle, minimum width =25pt, minimum height =110pt, inner sep=5pt,draw=black]  {} ;
	\node at(17.55,10.5) (30)[circle, minimum width =17pt, minimum height =17pt, inner sep=0.1pt, fill=blue!20,draw=black]{$b_1$};
	\node at(17.55,9.5)  (31)[circle, minimum width =17pt, minimum height =17pt, inner sep=0.1pt, fill=blue!20,draw=black]{$b_2$};
	\node at(17.55,8.5)          {$\cdots$};
	\node at(17.55,7.5)  (32)[circle, minimum width =17pt, minimum height =17pt, inner sep=0.1pt, fill=blue!20,draw=black]{$b_n$};
	\node at(17.55,6)    (33)[circle, minimum width =17pt, minimum height =17pt, inner sep=0.1pt, fill=blue!20,draw=black]{$t_1$};
	\node at(17.55,5)   (34)[circle, minimum width =17pt, minimum height =17pt, inner sep=0.1pt, fill=blue!20,draw=black]{$t_2$};
	\node at(17.55,4)          {$\cdots$};
	\node at(17.55,3)   (35)[circle, minimum width =17pt, minimum height =17pt, inner sep=0.1pt, fill=blue!20,draw=black]{$t_n$};
	\node at(18.8,7)    (36)[circle, minimum width =17pt, minimum height =17pt, inner sep=0.1pt, fill=blue!20,draw=black]{$\times$};
	\node at(20,7)      (37){$\mathcal{G}_{\theta}(u)(z)$};
	\node at(12.5,4.5)  (38){$z$};
	
	\draw[->] (24) --(25);
	\draw[->] (25) --(26);
	\draw[->] (26) --(30);
	\draw[->] (26) --(31);
	\draw[->] (26) --(32);
	\draw[->] (27) --(33);
	\draw[->] (27) --(34);
	\draw[->] (27) --(35);
	\draw[->] (28) --(36);
	\draw[->] (29) --(36);
	\draw[->] (36) --(19.35,7);
	\draw[->] (38) --(27);
\end{tikzpicture}
\end{figure}
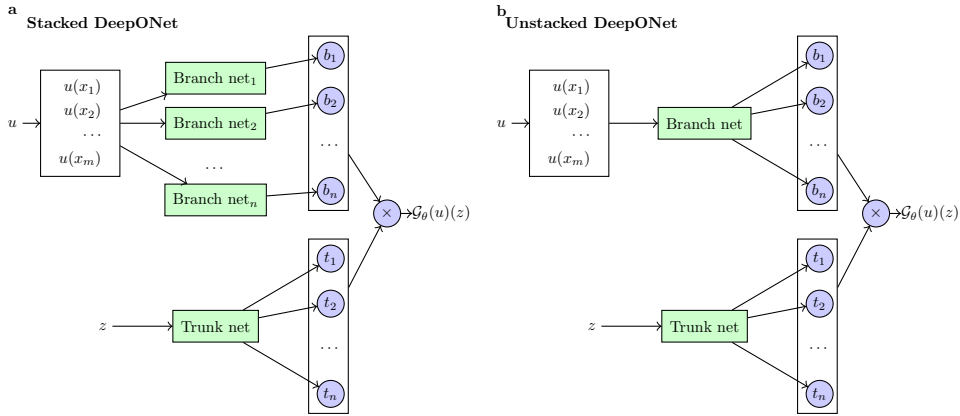

\subsection{Primal-dual method with DeepONets for (\ref{optimal_control})-(\ref{admissible_set})}
The DeepONets \cite{lu2021learning} provide a specialized deep learning framework to learn PDE solution operators.  For the convenience of readers, we give a brief overview of the
DeepONets, with a special focus on learning the solution operator of the state equation (\ref{state_eqn}).
 Typically, as shown in Figure \ref{fig:deepOnet}, the DeepONets architecture consists of two separate neural networks referred to as the ``branch" and ``trunk"
 networks, respectively. Then the DeepONets can be expressed as
$$
\mathcal{G}_{\theta}(u)(z)=\sum_{i=1}^nb_i(u)t_i(z)+b_0.
$$
Above, $\theta$ denotes the collection of all trainable weight and bias parameters in the branch and trunk networks. The vector $(b_1(u) , \cdots , b_n(u) )^\top$ is the output of the branch network with the input $\{u(x_j)\}_{j=1}^m$. For each input function $u$, $\{u(x_j)\}_{j=1}^m$ represent the evaluations at the fixed scattered sensors $\{x_j\}_{j=1}^m\subset\Omega\times(0,T)$. The vector $(t_1(z) , \cdots, t_n(z))^\top$ is the output of the trunk network with the continuous coordinates $z\in \Omega\times (0,T)$ as inputs; and $b_0 \in \mathbb{R}$ is a trainable bias.  Different from the fixed
locations $\{x_j\}_{j=1}^m\subset\Omega\times(0,T)$, the coordinates $z\in \Omega\times(0,T)$ may vary for different $u$. The
final output of the DeepONets is obtained by merging the outputs of the branch and trunk networks via an inner product.   The stacked DeepONet has one trunk network and $n$ stacked branch networks, while the unstacked DeepONet has one trunk network and one branch network.

Furthermore, we note that the DeepONets do not restrict the branch and trunk nets to any specific architecture. As $z\in \Omega\times(0,T)$ is usually low dimensional, a standard fully-connect neural network (FNN) is commonly used as the trunk net. The choice of the branch net depends on the structure of the input function $u$, and it can be chosen as a FNN, residual neural network, convolutional neural network (CNN), or a graph neural network (GNN).
For instance, if $\{u(x_j)\}_{j=1}^m$ is defined on a two-dimensional equispaced grid, then a CNN can be used; if $\{u(x_j)\}_{j=1}^m$  is given on an unstructured mesh, then a GNN can be used.
We refer to \cite{lu2021learning,lucomparison2022} for more discussions on the DeepONets.


Given a data set of different input functions $\{u_i\}$ and points $\{z_j\}$: $\{G(u_i)(z_j)\}_{1\leq i\leq N_1, 1\leq j\leq N_2}$ , we train the DeepONets by solving
\begin{equation}\label{loss_deeponet}
\min_{\theta}\mathcal{L}(\theta):=\frac{1}{N_1N_2}\sum_{i=1}^{N_1}\sum_{j=1}^{N_2}\|\mathcal{G}_{\theta}(u_i)(z_j)-G(u_i)(z_j)\|_{L^2(\Omega\times (0,T))}^2.
\end{equation}
Note that one
data point is a triplet $(u_i; z_j; G(u_i)(z_j))$, and thus one specific input function $u_i$ may appear in multiple data points with different $z_j$. For example, a dataset of size 400 can be generated from 20 $u$ trajectories, and each evaluates $G(u)(z)$ for 20 $z$ locations.

Using the DeepONets, we can obtain a surrogate model $y=\mathcal{S}_{\theta_s^*}(u)$ for the state equation $y=Su$, where $\theta_s^*$ is obtained by solving (\ref{loss_deeponet}) with $G$ replaced by the solution operator $S$. Similarly, we can also obtain a surrogate model $q=\mathcal{S}_{\theta_a^*}(p)$ for the adjoint equation $q=S^*p$. We thus specify (\ref{pdol}) as the following primal-dual method with DeepONets:
\begin{equation}\label{pdonet}
	\begin{aligned}
		u^{k+1}=P_{U_{ad}}\left(-\frac{\mathcal{S}_{\theta_a^*}({p}^k)-\frac{1}{r}u^k}{\alpha+\frac{1}{r}}\right),\quad
		p^{k+1}=(\mathcal{S}_{\theta_s^*}(2{u}^{k+1}-u^k)+\frac{1}{s}p^k-y_d)/(1+\frac{1}{s}).
	\end{aligned}
\end{equation}
Clearly, with the pre-trained surrogate models $y=\mathcal{S}_{\theta_s^*}(u)$ and $q=\mathcal{S}_{\theta_a^*}(p)$, one only needs to compute $\mathcal{S}_{\theta_a^*}({p}^k)$ and $\mathcal{S}_{\theta_s^*}(2{u}^{k+1}-u^k)$, and implement some simple algebraic operations.  Moreover,  given a different target $y_d$, the primal-dual method with DeepONets (\ref{pdonet}) can be directly applied to the resulting optimal control problem without solving any PDE.  Hence, the primal-dual method with DeepONets (\ref{pdonet}) is easy and cheap to implement. Finally, it is easy to see that the primal-dual method with DeepONets (\ref{pdonet}) for solving (\ref{optimal_control})-(\ref{admissible_set}) can be easily extended to other various optimal control problems modeled by (\ref{Basic_Problem}), see Example 3 in Section \ref{se: results_PDOL} for more discussions.

\subsection{Numerical results}\label{se: results_PDOL}
In this section, we discuss the implementation of the primal-dual method with DeepONets (\ref{pdonet}), and validate its effectiveness via some pedagogical numerical examples involving elliptic and parabolic control constrained optimal control problems.

\medskip
\noindent\textbf{Example 3.}
We consider the following elliptic control constrained optimal control problem:
\begin{equation}\label{model_1d_e}
	\underset{u\in L^2(\Omega), y\in L^2(\Omega)}{\min}~ J(y,u)=\frac{1}{2}\|y-y_d\|_{L^2(\Omega)}^2+\frac{\alpha}{2}\|u\|_{L^2(\Omega)}^2+\theta(u),
\end{equation}
where the state $y$ and the control $u$ satisfy the following state equation:
\begin{flalign}\label{state_1d_e}
		-\nu\Delta y+y=u+f~ \text{in}~ \Omega,\quad
		y=0~ \text{on}~ \Gamma.
\end{flalign}
Above, {$\Omega\subset \mathbb{R}^d(d\ge 1)$ is a convex polyhedral domain} with boundary $\Gamma:=\partial\Omega$, $y_d\in L^2(\Omega)$ is a given target, and $f\in H^{-1}(\Omega)$ is a given source term. The constant $\nu>0$ is the diffusion coefficient and  $\alpha>0$ is a regularization parameter.  We denote by $\theta(u):=I_{U_{ad}}(u)$ the indicator function of the admissible set $
	U_{ad}=\{u\in L^\infty(\Omega)| a\leq u(x)\leq b, ~\text{a.e.~in}~ \Omega \}\subset L^2(\Omega),$
where $a,b \in L^2(\Omega)$.

In our numerical experiments, we set $\Omega = (0,1)$, $\nu=1$, $\alpha=10^{-3}$, $a=-0.5$ and $b=0.5$. We further let
$$
\begin{aligned}
	&y=k_s\sin(\pi x), q=\alpha k_a \sin(2\pi x), u=\max\{a,\min\{b,-\frac{q}{\alpha}\}\}, \\
	&f=-u-\Delta y+y, y_d=y+\Delta q-q,
\end{aligned}
$$
where $k_s$ and $k_a$ are constants. Then, it is easy to show that $(u,y)^\top$ is the solution of problem (\ref{model_1d_e})-(\ref{state_1d_e}).  By choosing different $k_s$ and $k_a$, we can obtain different $y_d$ and thus specify a series of elliptic control constrained optimal control problems.

To obtain a surrogate model for (\ref{state_1d_e}), we  first consider constructing a neural operator  $\mathcal{N}_{\theta}$ by a DeepONet to approximate the solution operator $\bar{S}$ of the following elliptic equation
\begin{flalign}\label{state_1d_e0}
		-\nu\Delta y+y=u ~\text{in}~ \Omega, \quad
		y=0~\text{on}~ \Gamma.
\end{flalign}
Then, it is easy to see that $y=\mathcal{S}_{\theta_s^*}(u):=\mathcal{N}_{\theta^*}(u+f)$ is a surrogate model for (\ref{state_1d_e}). Moreover, since the state equation (\ref{state_1d_e}) is self-adjoint,  we can use $q=\mathcal{S}_{\theta^*_a}(p):=\mathcal{N}_{\theta^*}(p)$ as a surrogate model for the corresponding adjoint system of (\ref{model_1d_e})-(\ref{state_1d_e}).

We employ an unstacked DeepONet to construct $y=\mathcal{N}_{\theta^*}(u)$. Both the branch net and the trunk net
are fully-connected neural networks consisting of 2 hidden layers with 20 neurons per hidden layer and equipped with hyperbolic tangent activation functions.  We adapt the MATLAB codes used in \cite{li2020FNO} to generate a set of training data $\{u_i; z_j; \bar{S}(u_i)(z_j)\}_{1\leq i\leq N_1, 1\leq j\leq N_2}$.  For every $u_i$, $\{u_i(x_j)\}_{j=1}^m$ are the inputs of the branch network. We take $N_1=1000, N_2=m=65$, $\{x_j\}_{j=1}^m$ and $\{z_j\}_{j=1}^{N_2}$ are equi-spaced grids in [0,1]. We sample zero-boundary functions $\{u_i\}^{N_1}_{i=1}\in L^2(0,1)$ from a Gaussian random field with a Riesz kernel, i.e.,
$$
u_i \sim \mathcal{GR}(0, C),~\text{with}~ C=49^2(-\Delta+49 I)^{-2.5},
$$
where $\Delta$ and $I$ represent the Laplacian and the identity operator, respectively.
We then compute the solutions $\bar{S}(u_i)$ exactly in a Fourier space (see \cite{li2020FNO} for the details), and finally evaluate the values of $\bar{S}(u_i)(z_j)$ for every $(u_i, z_j)$.  Moreover, we modify the output $\mathcal{N}_{\theta}(u_i)(z_j)$ as $\mathcal{N}_{\theta}(u_i)(z_j)x(x-1)$ so that the homogeneous Dirichlet boundary condition $y(0)=y(1)=0$ is satisfied automatically. For training the neural networks, we implement 20000 iterations of the Adam \cite{kingma2015} with learning rate $\eta=10^{-3}$.  The training of the DeepONet is performed in Python utilizing the PyTorch framework. The training process is initialized using the default initializer of PyTorch. After the training process, we thus obtain the neural operator $\mathcal{N}_{\theta^*}$ and hence the surrogate models $y=\mathcal{N}_{\theta^*}(u+f)$ and $q=\mathcal{N}_{\theta^*}(p)$. We then obtain the following primal-dual method with DeepONet for solving (\ref{model_1d_e})-(\ref{state_1d_e}):
\begin{equation}\label{pdonet_elliptic}
	\begin{aligned}
		u^{k+1}=P_{U_{ad}}\left(-\frac{\mathcal{N}_{\theta^*}{p}^k)-\frac{1}{r}u^k}{\alpha+\frac{1}{r}}\right),\quad
		p^{k+1}=(\mathcal{N}_{\theta^*}(2{u}^{k+1}-u^k+f)+\frac{1}{s}p^k-y_d)/(1+\frac{1}{s}).
	\end{aligned}
\end{equation}

We implement (\ref{pdonet_elliptic}) to (\ref{model_1d_e})-(\ref{state_1d_e}) with different choices of $k_s$ and $k_a$. We set $r=2\times 10^3$ and $s=4\times 10^{-1}$ in (\ref{pdonet_elliptic}), and terminate the iteration if (\ref{stopping}) holds with $tol=10^{-5}$. The numerical results are reported in Table \ref{tab:iter_ex3} and  Figure \ref{fig:result_ol_ell}. First, it can be observed from Table \ref{tab:iter_ex3} that   (\ref{pdonet_elliptic}) converges fast and the iteration numbers are almost not affected by $k_s$ and $k_a$. Moreover,  the relative errors of $u$ and $y$ are very small for all cases under investigation, which, together with the results in Figure \ref{fig:result_ol_ell}, imply that the computed controls and the exact ones are in excellent agreement and they are visually indistinguishable.  From these results, we may conclude  that (\ref{pdonet_elliptic}) is efficient and robust enough to pursue highly accurate solutions for control constrained elliptic optimal control problems.

\begin{table}[htpb]
	\centering
	\caption{Numerical results for Example 3 ( Err(u)=$\frac{\|u^k-u\|_ {L^2(\Omega)} }{\|u\|_{L^2(\Omega)}}$,Err(y)=$\frac{\|y^k-y\|_{L^2(\Omega)}}{\|y\|_{L^2(\Omega)}}$ ).}\label{tab:iter_ex3}
	{\footnotesize\begin{tabular}{|c|c|c|c|c|c|c|}
			\hline
			&$k_s=-0.2$&$k_s=0.2$&$k_s=0.4$&$k_s=0.6$&$k_s=0.8$&$k_s=1$\\
			&$k_a=-1$&$k_a=1$&$k_a=2$&$k_a=3$&$k_a=4$&$k_a=5$\\
			\hline
			Iter&30 &29& 26& 25& 25& 26\\
			\hline
			$Err(u)$&$1.41\times 10^{-2}$& $6.68\times 10^{-3}$&$9.30\times 10^{-3}$ &$1.28\times 10^{-2}$ & $1.69\times 10^{-2}$& $7.64\times 10^{-3}$\\
			\hline
		    $Err(y)$&$1.46\times 10^{-3}$ &$1.76\times 10^{-3}$&$1.84\times 10^{-3}$& $1.86\times 10^{-3}$&$1.85\times 10^{-3}$ &$1.99\times 10^{-3}$ \\
			\hline
		\end{tabular}
	}
\end{table}

\begin{figure}[htpb]
	\caption{ Numerical and exact controls for Example 3 .}\label{fig:result_ol_ell}
	\centering
	\subfigure[$k_s=-0.2,k_a=-1$]{
		\includegraphics[width=0.3\textwidth]{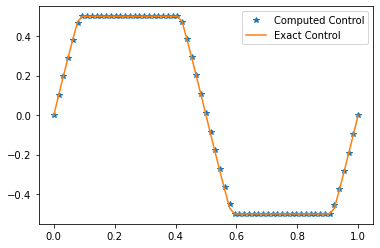}}
	\subfigure[$k_s=0.2,k_a=1$]{
		\includegraphics[width=0.3\textwidth]{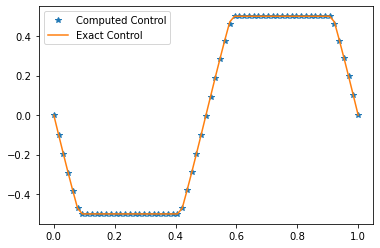}}
	\subfigure[$k_s=0.4,k_a=2$]{
		\includegraphics[width=0.3\textwidth]{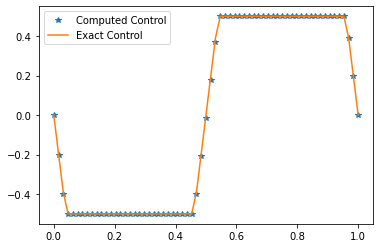}}
	\subfigure[$k_s=0.6,k_a=3$]{
		\includegraphics[width=0.3\textwidth]{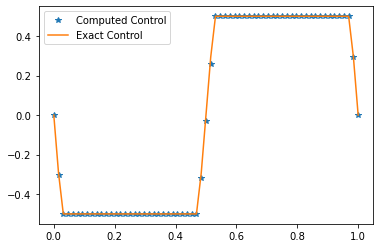}}
	\subfigure[$k_s=0.8,k_a=4$]{
		\includegraphics[width=0.3\textwidth]{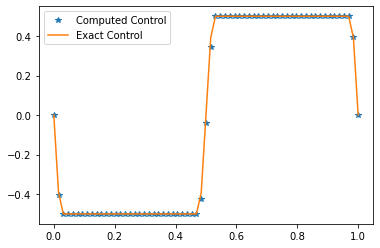}}
	\subfigure[$k_s=1,k_a=5$]{
		\includegraphics[width=0.3\textwidth]{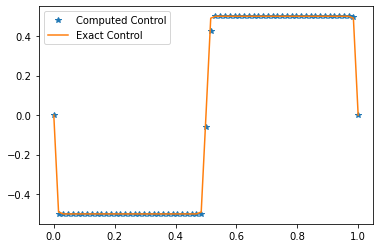}}
\end{figure}

\medskip
\noindent\textbf{Example 4.}  We consider the parabolic control constrained optimal control problem (\ref{ex1_Problem}). In particular, we set $\Omega=(0,1)$, $T=1$,  $\alpha=10^{-3}$ $a=-100$ and $b=100$. Let
$$
\begin{aligned}
	&y=k_s(e^t-1)\sin(\pi x),~ q=\alpha k_a(T-t)\sin(2\pi x), ~u=\max\{a,\min\{b,-\frac{q}{\alpha}\}\},\\
	&f=-u+\frac{\partial y}{\partial t}-\Delta y, ~y_d=y+\frac{\partial q}{\partial t}+\Delta q,
\end{aligned}
$$
where $k_s$ and $k_a$ are constants. Then, it is easy to show that $(u,y)^\top$ is the solution of problem (\ref{ex1_Problem}).  A series of parabolic control constrained optimal control problems can be specified by choosing different $k_s$ and $k_a$.

It is easy to see that implementing the primal-dual method with DeepONets (\ref{pdonet}) to  (\ref{ex1_Problem}) requires two surrogate models $y=\mathcal{S}_{\theta_s^*}(u)$ and $q=\mathcal{S}_{\theta_a^*}(p)$, respectively, for the state equation (\ref{ex1_state}) and the corresponding adjoint equation:
\begin{equation}\label{adjointP_ex}
	-\frac{\partial q}{\partial t}
	-\Delta q ={p}~ \text{in}
	~\Omega\times(0,T),~
	q=0~ \text{on}~ \Gamma\times(0,T),~
	q(T)=0.
\end{equation}
To this end, we first discretize (\ref{ex1_state}) and (\ref{adjointP_ex}) in time by the backward Euler method with the step size $\tau= T/N$, where $N$ is a positive integer. The resulting discretized state equation reads:
 $y_0=\phi$; for $n=1,\ldots,N$, with $y_{n-1}$ being known, we obtain $y_n$ from the
solution of the following linear elliptic problem:
\begin{flalign}\label{ell_step}
	\begin{aligned}
		-{\tau}\Delta{y}_n+{y}_n= \tau (f_n+u_n)+{y}_{n-1}~ \text{in}~ \Omega, \quad
		y_n=0~ \text{on}~ \Gamma,
	\end{aligned}
\end{flalign}
and the resulting discretized adjoint equation reads: $q(T)=0$; for $n=N-1,\ldots,0$, with $q_{n+1}$ being known, we obtain $q_n$ from the
solution of the following linear elliptic problem:
\begin{flalign}\label{ell2_step}
	\begin{aligned}
		-{\tau}\Delta{q}_n+{q}_n= \tau p_n+{q}_{n+1}~ \text{in}~ \Omega, \quad
		q_n=0~ \text{on}~ \Gamma,
	\end{aligned}
\end{flalign}
where we denote by $y_n$, $f_n$, $u_n$, $q_n$ and $p_n$ the approximate values of $y(n\tau)$, $f(n\tau)$, $u(n\tau)$, $q(n\tau)$ and $p(n\tau)$, respectively. It is easy to see that the elliptic equations (\ref{ell_step}) and (\ref{ell2_step}) have the same form as that of (\ref{state_1d_e0}). Hence, we can follow the same routine presented in Example 3 to construct two DeepONet surrogates for (\ref{ell_step}) and (\ref{ell2_step}). For the implementation of (\ref{pdonet}), we set $r=8\times 10^2$ and $s=4\times 10^{-1}$ and terminate the iteration if (\ref{stopping}) holds with $tol=10^{-5}$.

 The numerical results with respect to different $k_s$ and $k_a$ are reported in Table \ref{tab:iter_ex4} and Figure \ref{fig:result_ol_para}. Table \ref{tab:iter_ex4} shows that the primal-dual method with DeepONets (\ref{pdonet}) converges fast, with almost the same number of iterations for different values of $k_s$ and $k_a$. This suggests that the method is highly efficient and robust to the choices of $k_s$ and $k_a$. Additionally, the relative errors of $u$ and $y$ are very small across all test cases, which, in conjunction with the results presented in Figure \ref{fig:result_ol_ell}, indicate that the exact and computed controls are in excellent agreement and cannot be distinguished visually. Overall, these results demonstrate that the primal-dual method with DeepONets (\ref{pdonet}) is capable of producing highly accurate solutions.

\begin{table}[htpb]
	\centering
	\caption{Numerical results for Example 4.  (Err(u)=$\frac{\|u^k-u\|_{L^2(\Omega\times(0,T))}}{\|u\|_{L^2(\Omega\times(0,T))}}$,Err(y)=$\frac{\|y^k-y\|_{L^2(\Omega\times(0,T))}}{\|y\|_{L^2(\Omega\times(0,T))}}$ ).}\label{tab:iter_ex4}
	{\footnotesize\begin{tabular}{|c|c|c|c|c|c|}
			\hline
			&$k_s=0.3$&$k_s=0.4$&$k_s=0.6$&$k_s=-0.3$&$k_s=-0.5$\\
				&$k_a=500$&$k_a=600$&$k_a=700$&$ k_a=-500$&$k_a=-600$\\
			\hline
			Iter&38 &37& 37& 38& 40\\
			\hline
			Err(u)&$1.18\times 10^{-2}$& $1.11\times 10^{-2}$&$1.10\times 10^{-2}$ &$1.65\times 10^{-2}$ & $1.73\times 10^{-2}$\\
			\hline
			Err(y)&$9.67\times 10^{-2}$ &$7.18\times 10^{-2}$&$6.05\times 10^{-2}$& $1.04\times 10^{-1}$&$7.43\times 10^{-2}$  \\
			\hline
		\end{tabular}
	}
\end{table}

\begin{figure}[htpb]
	\caption{ Numerical and exact controls at $t=0.25$ (left), $0.5$ (middle), and $0.75$ (right)  for Example 4.}\label{fig:result_ol_para}
	\centering
	\subfigure[$k_s=0.3,k_a=500$]{
		\includegraphics[width=0.3\textwidth]{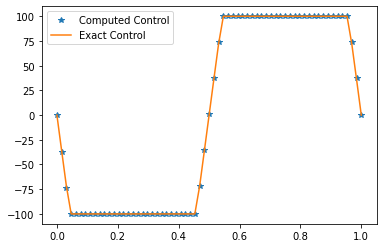}}
	\subfigure[$k_s=0.3,k_a=500$]{
		\includegraphics[width=0.3\textwidth]{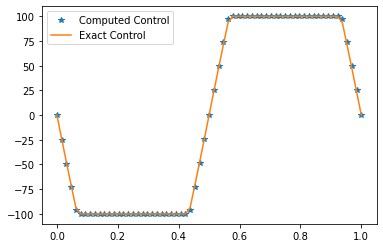}}
	\subfigure[$k_s=0.3,k_a=500$]{
		\includegraphics[width=0.3\textwidth]{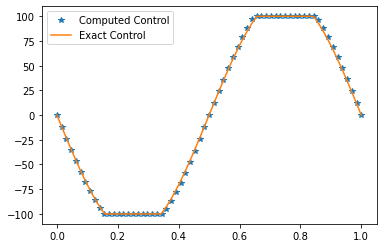}}
	\subfigure[$k_s=0.4,k_a=600$]{
		\includegraphics[width=0.3\textwidth]{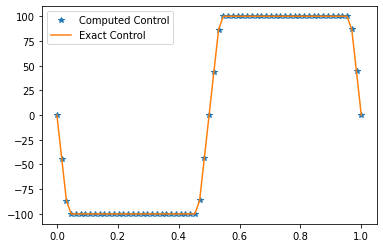}}
	\subfigure[$k_s=0.4,k_a=600$]{
		\includegraphics[width=0.3\textwidth]{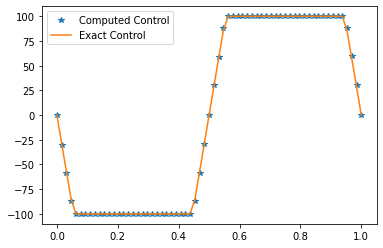}}
	\subfigure[$k_s=0.4,k_a=600$]{
		\includegraphics[width=0.32\textwidth]{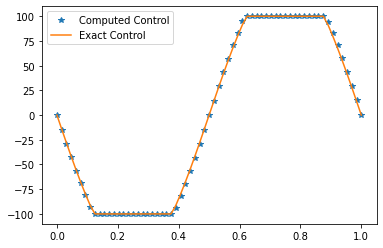}}
		\subfigure[$k_s=0.6,k_a=700$]{
		\includegraphics[width=0.3\textwidth]{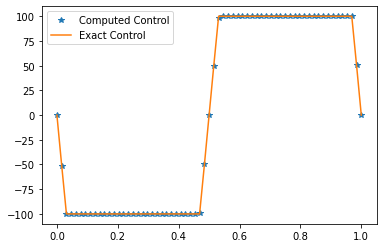}}
	\subfigure[$k_s=0.6,k_a=700$]{
		\includegraphics[width=0.3\textwidth]{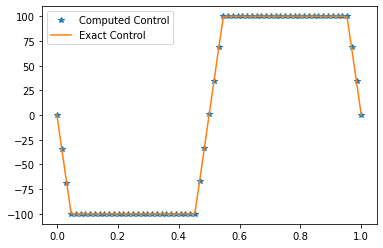}}
	\subfigure[$k_s=0.6,k_a=700$]{
		\includegraphics[width=0.3\textwidth]{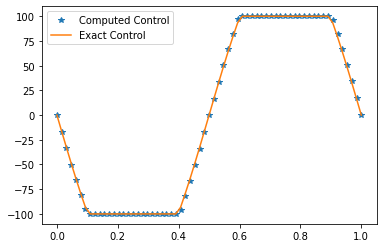}}
			\subfigure[$k_s=-0.3,k_a=-500$]{
		\includegraphics[width=0.3\textwidth]{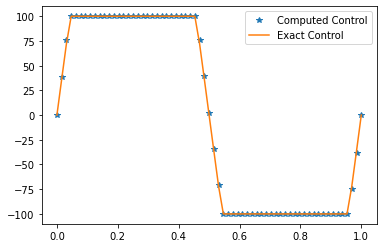}}
	\subfigure[$k_s=-0.3,k_a=-500$]{
		\includegraphics[width=0.3\textwidth]{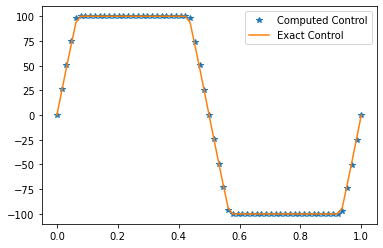}}
	\subfigure[$k_s=-0.3,k_a=-500$]{
		\includegraphics[width=0.3\textwidth]{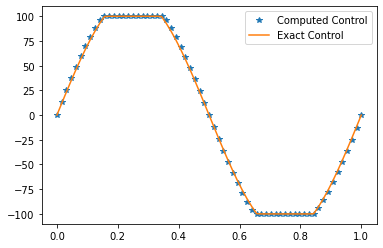}}
				\subfigure[$k_s=-0.5,k_a=-600$]{
		\includegraphics[width=0.3\textwidth]{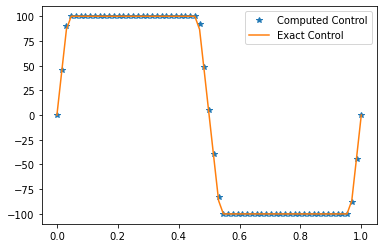}}
	\subfigure[$k_s=-0.5,k_a=-600$]{
		\includegraphics[width=0.3\textwidth]{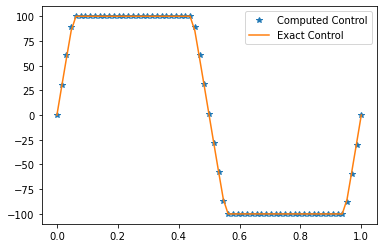}}
	\subfigure[$k_s=-0.5,k_a=-600$]{
		\includegraphics[width=0.3\textwidth]{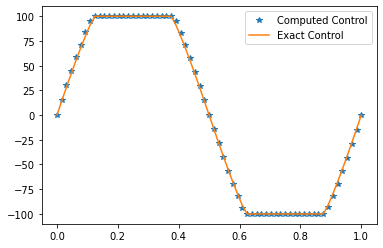}}
\end{figure}

\section{Conclusions and perspectives}\label{sec:conclusion}

We proposed two accelerated primal-dual methods for a general class of nonsmooth optimal control problems with partial differential equation (PDE) constraints. Both the accelerated methods keep the common advantages of primal-dual type methods. That is, different types of variables can be treated separately and thus the main computational load of each iteration is for two PDEs, while there is no need to solve high-dimensional and ill-conditioned saddle point systems or optimal control subproblems. For the accelerated primal-dual method with enlarged step sizes, it accelerates the primal-dual method in a simple and universal way, yet its convergence can be still proved rigorously. For the accelerated primal-dual method with operator learning, it is mesh-free and easy to implement. Indeed, surrogate models are constructed for the PDEs by deep neural networks, and once a neural operator is learned, only a forward pass of the neural networks is required to solve the PDEs. Efficiency of both the accelerated primal-dual methods is validated promisingly by numerical results.

Our work leaves interesting questions for future study. First, our philosophy of algorithmic design can be conceptually applied to optimal control problems with nonlinear PDE constraints. This could be achieved by combining the primal-dual method proposed in \cite{valkonen2014primal} with operator learning techniques. Second, our numerical results promisingly justifies the necessity to investigate some theoretical issues such as the convergence estimate and the error estimate for the approach of primal-dual methods with operator learning. Finally, we focused on the DeepONets to elaborate on our main ideas of accelerating primal-dual methods by operator learning techniques. It is interesting to consider other operator learning techniques in, e.g., \cite{cao2023,li2020FNO,li2020GNO,wang2021}.


\newpage

\bibliographystyle{siamplain}

\end{document}